\documentclass[a4paper]{article}%
\usepackage{amsfonts}
\usepackage{amsmath}
\usepackage{amssymb}
\usepackage{graphicx}
\usepackage[a4paper]{geometry}
\usepackage{amstext}%
\setcounter{MaxMatrixCols}{30}
%TCIDATA{OutputFilter=latex2.dll}
%TCIDATA{Version=5.50.0.2890}
%TCIDATA{CSTFile=40 LaTeX article.cst}
%TCIDATA{Created=Sunday, June 29, 2008 13:43:59}
%TCIDATA{LastRevised=Thursday, August 05, 2010 10:46:03}
%TCIDATA{<META NAME="GraphicsSave" CONTENT="32">}
%TCIDATA{<META NAME="SaveForMode" CONTENT="1">}
%TCIDATA{BibliographyScheme=Manual}
%TCIDATA{<META NAME="DocumentShell" CONTENT="Standard LaTeX\Blank - Standard LaTeX Article">}
%TCIDATA{Language=American English}
%TCIDATA{PageSetup=43,43,28,28,0}
%TCIDATA{Counters=arabic,1}
%TCIDATA{AllPages=
%H=36
%F=36
%}
%BeginMSIPreambleData
\providecommand{\U}[1]{\protect\rule{.1in}{.1in}}
%EndMSIPreambleData
\newtheorem{theorem}{Theorem}

\newtheorem{corollary}[theorem]{Corollary}

\newtheorem{lemma}[theorem]{Lemma}

\newtheorem{proposition}[theorem]{Proposition}
\newtheorem{remark}[theorem]{Remark}

\newenvironment{proof}[1][Proof]{\noindent\textbf{#1.} }{\ \rule{0.5em}{0.5em}}
\begin{document}

\title{Stability and control of a 1D quantum system with confining time dependent
delta potentials.}
\author{Andrea Mantile\thanks{IRMAR, UMR - CNRS 6625, Universit\'{e} Rennes 1, Campus
de Beaulieu, 35042 Rennes Cedex, France.}}
\date{}
\maketitle

\begin{abstract}
The evolution problem for a quantum particle confined in a 1D box and
interacting with one fixed point through a time dependent point interaction is
considered. Under suitable assumptions of regularity for the time profile of
the Hamiltonian, we prove the existence of strict solutions to the
corresponding Schr\"{o}dinger equation. The result is used to discuss the
stability and the steady-state local controllability of the wavefunction when
\ the strenght of the interaction is used as a control parameter.

\end{abstract}

\section{Introduction}

We consider the evolution problem for a quantum particle confined in a 1D
bounded domain and interacting with one fixed point through a delta shaped
potential whose strength varies in time. The Hamiltonian associated to this
class of potentials, denoted in the following with $H_{\alpha(t)}$, is defined
in terms of a time dependent parameter, $\alpha(t)$, which describes the time
profile of the interaction. Under suitable regularity assumptions, we study
the dynamical system defined by $H_{\alpha(t)}$, its stability properties and
the local controllability of the dynamics when $\alpha$ is used as a control function.

General conditions for the solution to the quantum evolution equation related
to non autonomous Hamiltonians, $H(t)$, have been long time investigated (e.g.
in \cite{Simon}, \cite{Reed} and \cite{Fattorini}). When the operator's domain
$D(H(t))$ depends on time (as in the case of time dependent point
interactions), the Cauchy problem%
\begin{equation}
\left\{
\begin{array}
[c]{l}%
\frac{d}{dt}\psi=-iH(t)\psi\\
\psi_{t=0}=\psi_{0}%
\end{array}
\right.  , \label{Cauchy}%
\end{equation}
was explicitely considered in \cite{Kys} by Kisy\'{n}ski using coercivity and
$C_{loc}^{2}$-regularity of $t\rightarrow H(t)$. A similar assumption,
$\alpha\in C_{loc}^{2}(t_{0},+\infty)$ in the above notation, is used in the
works of Yafaev \cite{Yaf1}, \cite{Yaf2} and \cite{Yaf3} (with M.~Sayapova) to
prove the existence of a strongly differentiable time propagator for the
scattering problem with time dependent delta interactions in $\mathbb{R}^{3}$.
Such a condition, however, can be relaxed by exploiting the explicit character
of point interactions. The dynamics associated to this class of operators,
indeed, is essentially described by the evolution of a finite dimensional
variable related to the values taken by the regular part of the state in the
interaction points (e.g. in \cite{Albeverio}, \cite{Dell'Anto}, \cite{AdTe}).
This leads to simplified evolution equations allowing rather explicit
estimates under Fourier transform. Using this approach, the quantum evolution
problem for a 1D time dependent delta interaction has been considered in
\cite{Taoufik}. In particular, it is shown that $\alpha\in H^{\frac{1}{4}%
}(0,T)$ allows to define a strongly continuous dynamical system in
$L^{2}(\mathbb{R})$, while $\alpha\in H^{\frac{3}{4}}(0,T)$ leads to the
existence of strong solutions to%
\begin{equation}
\left\{
\begin{array}
[c]{l}%
\frac{d}{dt}\psi=-iH_{\alpha(t)}\psi\\
\psi_{t=0}=\psi_{0}\in D(H_{\alpha(0)})
\end{array}
\right.  . \label{Cauchy 1}%
\end{equation}
The same strategy has also been adopted to study the diffusion problem with
nonautonomous delta potentials in $\mathbb{R}^{3}$ \cite{Dell'Anto}, and the
quantum evolution problem for a 1D nonlinear model where the parameter
$\alpha$ is assigned as a function of the particle's state \cite{AdTe}.

The techniques used in these works can be adapted to the 1D confining case.
However, the lack of a simple explicit expression for the free propagator
kernel and the use of eigenfunction expansions, which replaces the Fourier
transform analysis in this setting, necessarily requires some additional
efforts to obtain a result. In the perspective of the stability and
controllability analysis, we restrict ourselfs to the case $\alpha\in
H^{1}(0,T)$ and: $H_{\alpha}=-\frac{d^{2}}{dx^{2}}+\alpha\delta$ with
Dirichlet conditions on the boundary of $I=\left[  -\pi,\pi\right]  $. Under
these assumptions, we prove that the evolution problem (\ref{Cauchy 1}) admits
a strongly differentiable time propagator preserving the regularity of the
initial state. This turns out to be a key point to study the controllability
of the dynamics.

The controllability of a quantym dynamics through an external field has
attracted an increasing attention due to possible applications in nuclear
magnetic resonance, laser spectroscopy, photochemestry and quantum
information. This problem has been considered for confining Schr\"{o}dinger
operators with regular potentials of the form: $H=-\Delta_{\Omega}%
^{D}+V(x)+u(t)W(x)$, where $\Delta_{\Omega}^{D}$ is the Dirichlet Laplacian in
the bounded domain $\Omega\subset\mathbb{R}^{m}$, while $u$ is used as a
control function. The particular setting: $m=1$, $V=0$ and $W(x)=x$,
corresponding to a quantum particle confined in a 1D box and moving under the
action of a time dependent uniform eletric field, has been considered in
\cite{Beau}. The exact controllability of the quantum state was proved, in
this case, in $H^{7}$-neighbourhoods of the steady states by using $u\in
L^{2}(0,T)$ controls with $T>T_{m}>0$ (actually a simpler version of this
proof holding for all $T>0$ is given in \cite{BeLa}). For the same system, the
exact controllability between neighbourhoods of any couple of eigenstates is
discussed in \cite{BeCo}. In the more general setting, with $V,W\in C^{\infty
}\left(  \bar{\Omega},\mathbb{R}\right)  $ and $m$ being any space dimension,
differents approximate controllability results in $L^{2}$ have been presented
in \cite{ChMaSiBo} and \cite{Ner}. An example concerned with singular
pointwise potentials is presented in \cite{AdBo}, where point interaction
Hamiltonians are used to construct a general scheme allowing to steer the
system between the eigenstates of a drift operator $H_{0}$ (the 1D Dirichlet
Laplacian). The idea is to introduce adiabatic perturbations of the spectrum
of $H_{0}$ producing eigenvalues intersection, while controlling the state's
evolution through the adiabatic theorem. The result is the approximate state
to state controllability in infinite time. The particular case of 1D point
interactions is also related to control problems on quantum graphs where these
Hamiltonians naturally arise.

In Section \ref{Sec_control}, the nonlinear map $\alpha\rightarrow\psi_{T}$,
associating to the function $\alpha$ the solution at time $T$ of
(\ref{Cauchy 1}), is considered for a fixed initial state $\psi_{0}\in
D(H_{\alpha(0)})$. This can be regarded as a control system where the control
parameter is $\alpha$ and the target state is $\psi_{T}$. Using the regularity
properties of the time propagator, we show that: for $\psi_{0}\in H^{2}\cap
H_{0}^{1}(I)$, the map $\alpha\rightarrow\psi_{T}$ is of class $C^{1}%
(H_{0}^{1}(0,T),\,H^{2}\cap H_{0}^{1}(I))$. Then, the local controllability
relies on the surjectivity of the corresponding linearized map according to an
inverse function argument. This point is considered in Section
\ref{Sec_control_lin} where a general condition for the solution of the
linearized control problem is given in (\ref{Lin1})-(\ref{Lin2}) and
proposition \ref{Prop_moment}. When $\psi_{0}$ coincides with an even
eigenstate of the Diriclet Laplacian, this scheme provides with a result of
local steady state controllability in finite time.

\section{The Model}

Point interactions form a particular class of singular perturbations of the
Laplacian supported by finite set of points. In $\mathbb{R}^{d}$, $d\leq3$,
these Hamiltonians have been rigorously defined using the theory of
selfadjoint extensions of symmetric operators (e.g. in \cite{Albeverio}). The
definition easily extends to the case of bounded regions by taking into
account the Dirichlet conditions on the boundary for the functions in the
operator's domain (e.g. in \cite{A.M.}). In what follows we consider a delta
perturbation of the Dirichlet Laplacian, centered in the origin of the
interval $I=\left[  -\pi,\pi\right]  $. In terms of quadratic forms, this
family of Hamiltonians acts on $H_{0}^{1}(I)$ as%
\begin{equation}
H_{\alpha}=-\Delta_{I}^{D}+\alpha\,\delta\label{operatore 0}%
\end{equation}
where $\Delta_{I}^{D}$ is the Dirichlet Laplacian on the interval, $\delta$ is
the Dirac distribution centered in the origin, while $\alpha$ is a real
parameter. The operator's domain $D(H_{\alpha})$ extends to all those vectors
$\psi\in L^{2}(I)$, such that: $H_{\alpha}\psi\in L^{2}(I)$. The description
of $D(H_{\alpha})$ is strictly related to the properties of the Green's
kernel, $\mathcal{G}_{0}^{z}$, for the resolvent of the unperturbed operator
$\left(  -\Delta_{I}^{D}+z\right)  ^{-1}$. Fix $x^{\prime}\in I$; whenever $z$
does not belong to the spectrum of $\Delta_{I}^{D}$, $\mathcal{G}_{0}^{z}$
satisfies the equation%
\begin{equation}
\left(  -\Delta_{I}^{D}+z\right)  \mathcal{G}_{0}^{z}(x,x^{\prime}%
)=\delta(x-x^{\prime}),\quad z\in\mathbb{C}\backslash\sigma_{\Delta_{I}^{D}}
\label{Green 0}%
\end{equation}
The solution to (\ref{Green 0}) can be represented as the sum of the 'free'
Green's function plus an additional term taking into account the conditions at
the boundary%
\begin{equation}
\mathcal{G}_{0}^{z}(x,x^{\prime})=\frac{e^{-\sqrt{z}\left\vert x-x^{\prime
}\right\vert }}{2\sqrt{z}}-h(x,x^{\prime},z) \label{Green 1}%
\end{equation}%
\begin{equation}
\left\{
\begin{array}
[c]{l}%
\left(  -\Delta_{I}^{D}+z\right)  h(\cdot,x^{\prime},z)=0\\
\left.  h(\cdot,x^{\prime},z)\right\vert _{x=\pm\pi}=\left.  \frac
{e^{-\sqrt{z}\left\vert \cdot-x^{\prime}\right\vert }}{2\sqrt{z}}\right\vert
_{x=\pm\pi}%
\end{array}
\right.  \label{h 1}%
\end{equation}
The solution of (\ref{h 1}) is%
\begin{equation}
\mathcal{G}_{0}^{z}(x,x^{\prime})=\frac{e^{-\sqrt{z}\left\vert x-x^{\prime
}\right\vert }}{2\sqrt{z}}-\frac{1}{\sqrt{z}}\frac{e^{-\pi\sqrt{z}}}%
{e^{2\pi\sqrt{z}}-e^{-2\pi\sqrt{z}}}\left[  e^{\sqrt{z}x}\sinh\left(  \sqrt
{z}(\pi+x^{\prime})\right)  +e^{-\sqrt{z}x}\sinh\left(  \sqrt{z}(\pi
-x^{\prime})\right)  \right]  \label{Green 1.1}%
\end{equation}
Another useful representation of $\mathcal{G}_{0}^{z}$ is in terms of Fourier
expansion w.r.t. the Laplacian's eigenfunctions. Denoting with $\psi_{k}$ and
$\lambda_{k}$ respectively the eigenfunctions and the eigenvalues of
$-\Delta_{I}^{D}$, we have%
\begin{equation}
\psi_{k}(x)=\left\{
\begin{array}
[c]{l}%
\frac{1}{\sqrt{\pi}}\,\sin\frac{k}{2}x\quad k\ even\\
\frac{1}{\sqrt{\pi}}\,\cos\frac{k}{2}x\quad k\ odd
\end{array}
\right.  ;\quad\lambda_{k}=\frac{k^{2}}{4};\quad k\in\mathbb{N}
\label{stati stazionari}%
\end{equation}
and%
\begin{equation}
\mathcal{G}_{0}^{z}(x,x^{\prime})=\sum_{k\in\mathbb{N}}\frac{1}{\lambda_{k}%
+z}\,\psi_{k}^{\ast}(x^{\prime})\,\psi_{k}(x),\quad z\notin\sigma_{\Delta
_{I}^{D}} \label{Green 2.0}%
\end{equation}

Using (\ref{operatore 0}) and (\ref{Green 0}), a straightforward calculation
shows that $H_{\alpha}\psi\in L^{2}(I)$ whenever $\psi$ has the form%
\begin{equation}
\psi=\phi+q\mathcal{G}_{0}^{z}(\cdot,0),\ \phi\in D(\Delta_{I}^{D}%
),\ -q=\alpha\,\psi(0) \label{b.c. 1}%
\end{equation}
This is a general characterization of $D(H_{\alpha})$. It can be derived by
using the von Neumann theory of selfadjoint extensions to identify $H_{\alpha
}$ as the extension of the symmetric operator $H_{0}$%
\begin{equation}
\left\{
\begin{array}
[c]{l}%
D(H_{0})=\left\{  \psi\in H^{2}\cap H_{0}^{1}(I)\,\left\vert \,\psi
(0)=0\right.  \right\} \\
H_{\alpha}\psi=-\frac{d^{2}}{dx^{2}}\psi
\end{array}
\right.  \label{H_0}%
\end{equation}
to those vectors $\psi\in D(H_{0}^{\ast})$ fulfilling the selfadjoint boundary
conditions (e.g. in \cite{Albeverio 2} and \cite{Reed})%
\begin{equation}
\left\{
\begin{array}
[c]{l}%
\psi(0^{+})-\psi(0^{-})=0\\
\psi^{\prime}(0^{+})-\psi^{\prime}(0^{-})=\alpha\,\psi(0)
\end{array}
\right.  \label{b.c. 2}%
\end{equation}
The following proposition is a rephrasing of the result in \cite{Albeverio} in
the bounded domain case.

\begin{proposition}
\label{Proposition 0}Let $\alpha\in\mathbb{R}$, $\lambda\in\mathbb{C}%
\backslash\sigma_{\Delta_{I}^{D}}$ and denote with $H_{\alpha}$ the family of
selfadjoint extensions of $H_{0}$ associated to the boundary condition
(\ref{b.c. 2}). The following representation holds:%
\begin{equation}
D(H_{\alpha})=\left\{  \psi\in H^{2}(I\backslash\left\{  0\right\}  )\cap
H_{0}^{1}(I)\ \left\vert \ \psi=\phi^{\lambda}+q\mathcal{G}_{0}^{\lambda
}\left(  \cdot,0\right)  ;\ \phi^{\lambda}\in H^{2}\cap H_{0}^{1}%
(I);\ -q=\alpha\psi(0)\right.  \right\}  \label{dominio1}%
\end{equation}%
\begin{equation}
H_{\alpha}\psi=-\frac{d^{2}}{dx^{2}}\phi^{\lambda}-\lambda q\mathcal{G}%
_{0}^{\lambda}(\cdot,0) \label{operatore}%
\end{equation}
Moreover, for $z\in\mathbb{C}\backslash\mathbb{R}$, $\varphi\in L^{2}(I)$, the
resolvent $\left(  H_{\alpha}+z\right)  ^{-1}$ writes as:%
\begin{equation}
\left(  H_{\alpha}+z\right)  ^{-1}\varphi=\left(  -\Delta_{I}^{D}+z\right)
^{-1}\varphi-\frac{\alpha}{1+\alpha\,\mathcal{G}_{0}^{z}(0)}\left(
-\Delta_{I}^{D}+z\right)  ^{-1}\varphi(0)\,\mathcal{G}_{0}^{z}(\cdot,0)
\label{Resolvent}%
\end{equation}

\end{proposition}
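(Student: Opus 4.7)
The plan is to verify the domain characterization (\ref{dominio1})-(\ref{operatore}) and the resolvent formula (\ref{Resolvent}) by direct computation based on the singular structure of $\mathcal{G}_0^\lambda(\cdot,0)$ at the origin, taking the self-adjointness of $H_\alpha$ as a consequence of von Neumann extension theory. From (\ref{Green 0}) and (\ref{Green 1.1}), $\mathcal{G}_0^\lambda(\cdot,0) \in H_0^1(I) \cap H^2(I\setminus\{0\})$, is continuous at $x=0$, and its derivative exhibits the unit jump $\mathcal{G}_0^{\lambda,\prime}(0^+,0) - \mathcal{G}_0^{\lambda,\prime}(0^-,0) = -1$, obtained by integrating the distributional identity $(-\Delta_I^D + \lambda)\mathcal{G}_0^\lambda(\cdot,0) = \delta$ across the origin. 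The symmetric operator $H_0$ in (\ref{H_0}) has deficiency indices $(1,1)$ with deficiency subspaces spanned by $\mathcal{G}_0^{\pm i}(\cdot,0)$, so its real self-adjoint extensions form a one-parameter family; matching the von Neumann parameter to the physical condition (\ref{b.c. 2}) identifies this family with $\{H_\alpha\}_{\alpha \in \mathbb{R}}$, in the same way as in the unbounded setting of \cite{Albeverio}.

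For the domain representation I would establish the two inclusions. Given $\psi = \phi^\lambda + q\mathcal{G}_0^\lambda(\cdot,0)$ with $\phi^\lambda \in H^2 \cap H_0^1(I)$ and $-q = \alpha\psi(0)$, continuity at $0$ gives a well-defined value $\psi(0)$, while the unit jump of $\mathcal{G}_0^\lambda(\cdot,0)$ forces $\psi'(0^+) - \psi'(0^-) = -q = \alpha\psi(0)$, so (\ref{b.c. 2}) is automatically fulfilled. Applying $-\partial_x^2$ on $I\setminus\{0\}$, where $-\partial_x^2\mathcal{G}_0^\lambda(\cdot,0) = -\lambda\mathcal{G}_0^\lambda(\cdot,0)$, yields (\ref{operatore}) and shows $H_\alpha\psi \in L^2(I)$. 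Conversely, starting from $\psi \in H^2(I\setminus\{0\}) \cap H_0^1(I)$ satisfying (\ref{b.c. 2}), I would define $q := -\alpha\psi(0)$ and $\phi^\lambda := \psi - q\mathcal{G}_0^\lambda(\cdot,0)$; the continuity relation in (\ref{b.c. 2}) together with the cancellation of the derivative jumps forces $\phi^\lambda \in H^2(I)$, and the Dirichlet data at $\pm\pi$ are inherited from both summands, so $\phi^\lambda \in H^2 \cap H_0^1(I)$ as required. The independence of the decomposition on the spectral parameter $\lambda$ is automatic because $q$ depends only on $\psi(0)$ and the unit jump is $\lambda$-independent.

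The resolvent (\ref{Resolvent}) follows by the ansatz $\psi = \phi^z + q\mathcal{G}_0^z(\cdot,0)$ with $-q = \alpha\psi(0)$: applying (\ref{operatore}) at $\lambda = z$, the terms $\pm zq\mathcal{G}_0^z(\cdot,0)$ cancel, so $(H_\alpha + z)\psi = \varphi$ reduces to $(-\Delta_I^D + z)\phi^z = \varphi$ together with the self-consistency equation $-q = \alpha(\phi^z(0) + q\mathcal{G}_0^z(0,0))$; solving for $q$ and substituting recovers (\ref{Resolvent}). The main obstacle I anticipate is not algebraic but structural: verifying that (\ref{b.c. 2}) truly picks out a self-adjoint rather than merely symmetric extension, and that $1 + \alpha\mathcal{G}_0^z(0,0) \neq 0$ for $z \in \mathbb{C}\setminus\mathbb{R}$ so the candidate resolvent is everywhere defined and bounded on $L^2(I)$. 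Both points rest on the non-vanishing of $\mathrm{Im}\,\mathcal{G}_0^z(0,0)$ away from the real axis, which can be read off from the spectral representation (\ref{Green 2.0}).
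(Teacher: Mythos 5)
Your proposal is correct, but it cannot be compared line-by-line with an argument in the paper, because the paper gives no proof of Proposition \ref{Proposition 0}: it is stated as ``a rephrasing of the result in \cite{Albeverio} in the bounded domain case'' and left to the literature. What you supply is a self-contained direct verification, and every step checks out: integrating $(-\Delta_I^D+\lambda)\mathcal{G}_0^{\lambda}(\cdot,0)=\delta$ across the origin does give the $\lambda$-independent unit jump $\mathcal{G}_0^{\lambda\prime}(0^+,0)-\mathcal{G}_0^{\lambda\prime}(0^-,0)=-1$, which converts the decomposition $\psi=\phi^{\lambda}+q\mathcal{G}_0^{\lambda}(\cdot,0)$ with $-q=\alpha\psi(0)$ into the boundary condition (\ref{b.c. 2}) and back; the cancellation of $\pm zq\mathcal{G}_0^{z}$ in the ansatz $(H_\alpha+z)\psi=\varphi$ reduces the problem to $(-\Delta_I^D+z)\phi^{z}=\varphi$ plus the scalar self-consistency equation, whose solution reproduces (\ref{Resolvent}); and the nonvanishing of $1+\alpha\,\mathcal{G}_0^{z}(0,0)$ for $z\notin\mathbb{R}$ does follow from $\operatorname{Im}\mathcal{G}_0^{z}(0,0)=-\operatorname{Im}(z)\sum_k|\psi_k(0)|^2/|\lambda_k+z|^2\neq 0$ together with $\alpha\in\mathbb{R}$ (and is trivial for $\alpha=0$). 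What your route buys, compared with the bare citation, is an explicit check that the only features of the whole-line construction actually used survive in the box: the Dirichlet correction $h(x,x',\lambda)$ in (\ref{Green 1}) is smooth across $x=0$, so it changes neither the unit jump nor the deficiency-index count $(1,1)$, and the boundary data at $\pm\pi$ are carried by both summands of the decomposition. Two points you flag as ``structural obstacles'' are indeed the ones to spell out in a full write-up: self-adjointness is most cleanly closed by observing that the operator defined by (\ref{dominio1})--(\ref{operatore}) is symmetric and that your explicit formula exhibits $\operatorname{Ran}(H_\alpha\pm i)=L^{2}(I)$; and one should note that the real-$\alpha$ boundary conditions (\ref{b.c. 2}) parametrize all self-adjoint extensions except the decoupled one (formally $\alpha=\infty$), which is all the proposition claims.
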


\begin{remark}
Although the representation: $\psi=\phi^{\lambda}+q\mathcal{G}_{0}^{\lambda
}\left(  \cdot,0\right)  $ may change with different choices of $\lambda$, the
operator $H_{\alpha}$ depends only on the value of the parameter $\alpha$
related to the interaction's strength.
\end{remark}

Next we assign $\alpha$ as a function of time and consider the non autonomous
system defined by $H_{\alpha(t)}$%
\begin{equation}
\left\{
\begin{array}
[c]{l}%
\medskip i\frac{d}{dt}\psi(x,t)=H_{\alpha(t)}\psi(x,t)\,,\\
\psi(x,0)=\psi_{0}(x)\in D(H_{\alpha(0)})\,.
\end{array}
\right.  \label{Schroedinger}%
\end{equation}
The mild solutions are%
\begin{equation}
\psi(\cdot,t)=e^{it\Delta_{I}^{D}}\psi_{0}+i\int_{0}^{t}q(s)\,e^{i\left(
t-s\right)  \Delta_{I}^{D}}\delta\,ds\,, \label{Schroedinger 2-1}%
\end{equation}
where $e^{it\Delta_{I}^{D}}$ is the time propagator associated to $-\Delta
_{I}^{D}$ and $-q(t)=\alpha(t)\psi(0,t)$ fixes the boundary condition of the
operator's domain. The action of the operator $e^{it\Delta_{I}^{D}}$ on
$L^{2}(I)$ is%
\begin{equation}
e^{it\Delta_{I}^{D}}f=\sum\limits_{k\in\mathbb{N}}\left(  \psi_{k},f\right)
_{L^{2}(I)}\,e^{-i\lambda_{k}t}\psi_{k}\quad\forall f\in L^{2}(I)
\label{propagatore}%
\end{equation}
where the scalar product in $L^{2}$ is defined according to: $\left(
u,v\right)  _{L^{2}(I)}=%
%TCIMACRO{\dint _{-\pi}^{\pi}}%
%BeginExpansion
{\displaystyle\int_{-\pi}^{\pi}}
%EndExpansion
\bar{u}\,v$. This relation suggests to replace the distributional part at the
r.h.s. of (\ref{Schroedinger 2-1}) with%
\[
\frac{i}{\sqrt{\pi}}\sum\limits_{k\in\mathbb{N}}\int_{0}^{t}q(s)\,e^{-i\lambda
_{k}\left(  t-s\right)  }\,ds\,\psi_{k}%
\]
It follows%
\begin{equation}
\psi(\cdot,t)=e^{it\Delta_{I}^{D}}\psi_{0}+\frac{i}{\sqrt{\pi}}\sum
\limits_{\substack{k\in\mathbb{N}\\k\ odd}}\int_{0}^{t}q(s)\,e^{-i\lambda
_{k}\left(  t-s\right)  }\,ds\,\psi_{k} \label{Schroedinger 3}%
\end{equation}%
\begin{equation}
q(t)=-\alpha(t)\left[  e^{it\Delta_{I}^{D}}\psi_{0}(0)+\frac{i}{\pi}%
\sum\limits_{\substack{k\in\mathbb{N}\\k\ odd}}\int_{0}^{t}q(s)\,e^{-i\lambda
_{k}\left(  t-s\right)  }\,ds\right]  \label{carica 1}%
\end{equation}
The essential informations about the dynamics described in
(\ref{Schroedinger 3})-(\ref{carica 1}) are contained in the auxiliary
variable $q$, usually referred to as the \emph{charge} of the particle (e.g.
in \cite{Albeverio}). In what follows, the solution of the above problem is
considered under suitable regularity assumptions for $\alpha$. The equivalence
with the original Cauchy problem (\ref{Schroedinger}) is further addressed.
Finally, the dependence of this solution from $\alpha$, considered as a
control parameter, is investigated. Our result is exposed in the following theorem.

\begin{theorem}
\label{Th}Assume: $\alpha\in H^{1}(0,T)$, $\psi_{0}\in D(H_{\alpha(0)})$ and
let $\psi_{T}(\psi_{0},\alpha)$, $q(t,\psi_{0},\alpha)$ respectively denote
the solution at time $T$ and the charge as functions of the intial state and
of the parameter $\alpha$. The following properties hold.\medskip\newline$1)$
The system (\ref{Schroedinger 3}) - (\ref{carica 1}) admits an unique solution
$\psi_{t}\in C(0,T;\,H^{1}(I))\cap C^{1}(0,T;\,L^{2}(I))$ with: $\psi_{t}\in
D(H_{\alpha(t)})$ and $i\partial_{t}\psi(\cdot,t)=H_{\alpha(t)}\psi(\cdot,t)$
at each $t$.\medskip\newline$2)$ The map $\alpha\rightarrow\psi_{T}(\psi
_{0},\alpha)$ is $C^{1}(H^{1}(0,T),\,H_{0}^{1}(I))$ in the sense of
Fr\'{e}chet. Moreover, the 'regular part' of the solution at time $T$, defined
by: $\psi_{T}(\psi_{0},\alpha)-q(T,\psi_{0},\alpha)\mathcal{G}_{0}^{\lambda}$
and considered as a function of $\alpha$, is of class $C^{1}(H^{1}%
(0,T),\,H^{2}\cap H_{0}^{1}(I))$. If $\alpha\in H_{0}^{1}(0,T)$, then
$\psi_{T}(\psi_{0},\alpha)$ coincides with its regular part and $\alpha
\rightarrow\psi_{T}(\psi_{0},\alpha)\in C^{1}(H_{0}^{1}(0,T),\,H^{2}\cap
H_{0}^{1}(I))$.\medskip\newline$3)$ Let $\alpha\in H_{0}^{1}(0,T)$ and
$\mathcal{W}$ be the subspace of $H^{2}\cap H_{0}^{1}(I)$ generated by the
system: $\left\{  \psi_{k},\ k\text{ odd}\right\}  $. If $\psi_{0}%
\in\mathcal{W}$, then: $\alpha\rightarrow\psi_{T}(\psi_{0},\alpha)\in
C^{1}(H_{0}^{1}(0,T),\,\mathcal{W})$. If $\psi_{0}=\psi_{\bar{k}}$ for a fixed
$\bar{k}$ odd and $T\geq8\pi$, then: it exist an open neighbourhood $V\times
P\subseteq H_{0}^{1}(0,T)\times\mathcal{W}$ of the point $\left(  0,\psi
_{\bar{k}}\right)  $ such that $\left.  \alpha\rightarrow\psi_{T}(\psi
_{0},\alpha)\right\vert _{V}:V\rightarrow P$ is surjective.
\end{theorem}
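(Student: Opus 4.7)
My approach decouples the theorem into three stages: (i) well-posedness of the charge equation (\ref{carica 1}) as a Volterra-type equation, (ii) Fr\'echet smoothness of the solution map via an implicit-function argument, and (iii) invariance of $\mathcal{W}$ together with a trigonometric moment problem yielding local controllability.

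\emph{Well-posedness.} I plan to rewrite (\ref{carica 1}) as
\begin{equation*}
q(t) + \frac{i\alpha(t)}{\pi}(K \ast q)(t) = -\alpha(t)\, e^{it\Delta_I^D}\psi_0(0),\qquad K(\tau) := \sum_{k\text{ odd}} e^{-i\lambda_k \tau},
\end{equation*}
and to exploit the fact that, by Poisson summation, $K$ is a Jacobi-theta kernel with an integrable $|\tau|^{-1/2}$ singularity at $\tau = 0$: convolution by $K$ gains a half derivative in the Sobolev scale. Because $\alpha \in H^1(0,T) \hookrightarrow C([0,T])$ and $\psi_0 \in D(H_{\alpha(0)})$ gives $t \mapsto e^{it\Delta_I^D}\psi_0(0) \in C^{1/2}([0,T])$, the right-hand side lies in $H^{1/2}(0,T)$. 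An integration by parts against $\alpha'$ plus iteration of the Volterra operator produces a contraction on short subintervals, whence Gronwall yields a unique $q \in H^{1/2}(0,T) \cap C([0,T])$. Substituting $q$ into (\ref{Schroedinger 3}) and isolating the singular contribution by means of Proposition \ref{Proposition 0} writes $\psi_t = q(t)\mathcal{G}_0^\lambda(\cdot,0) + \phi_t^\lambda$ with $\phi_t^\lambda \in H^2 \cap H_0^1(I)$; this delivers $\psi_t \in C(0,T;H^1(I))$, $\psi_t \in D(H_{\alpha(t)})$, and the identity $i\partial_t \psi = H_{\alpha(t)}\psi$ by direct differentiation.

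\emph{Fr\'echet regularity.} The operator $I + \tfrac{i\alpha}{\pi}\,K\ast$ depends polynomially (hence $C^\infty$) on $\alpha \in H^1(0,T)$ and is invertible on $H^{1/2}(0,T)$ by the previous step, so the implicit function theorem gives a $C^\infty$ map $\alpha \mapsto q$. Composing with (\ref{Schroedinger 3}) transports this to $\alpha \mapsto \psi_T \in C^1(H^1(0,T), H_0^1(I))$. For the regular-part statement, the decomposition $\psi_T = q(T)\mathcal{G}_0^\lambda(\cdot,0) + \phi_T^\lambda$ has both pieces depending $C^1$-smoothly on $\alpha$, with $\phi_T^\lambda$ valued in $H^2 \cap H_0^1(I)$. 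Finally $\alpha \in H_0^1(0,T)$ forces $\alpha(T) = 0$, so $q(T) = -\alpha(T)\psi_T(0) = 0$ and $\psi_T$ reduces to $\phi_T^\lambda$.

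\emph{Invariance and controllability.} If $\psi_0 \in \mathcal{W}$, both terms on the right of (\ref{Schroedinger 3}) lie in $\mathcal{W}$: the free propagator preserves the subspace through (\ref{propagatore}), and the explicit series involves only odd $\psi_k$. The $C^1$ statement of the previous step therefore restricts to $C^1(H_0^1(0,T), \mathcal{W})$. To prove local controllability at $\psi_0 = \psi_{\bar k}$, I linearize $\alpha \mapsto \psi_T$ at $\alpha = 0$: there $q \equiv 0$ and the free evolution is $e^{-i\lambda_{\bar k}T}\psi_{\bar k}$, so differentiating (\ref{carica 1}) gives $D_\alpha q(0)(\eta)(t) = -\eta(t)\psi_{\bar k}(0)\,e^{-i\lambda_{\bar k}t}$, and substituting in (\ref{Schroedinger 3}) yields
\begin{equation*}
D_\alpha\psi_T(0)(\eta) = -\frac{i}{\pi}\sum_{k\text{ odd}} e^{-i\lambda_k T}\,\psi_k \int_0^T \eta(s)\,e^{i(\lambda_k-\lambda_{\bar k})s}\,ds.
\end{equation*}
Surjectivity onto $\mathcal{W}\cap (H^2\cap H_0^1(I))$ is then a trigonometric moment problem for the frequencies $\omega_k := \lambda_k - \lambda_{\bar k} \in \mathbb{Z}$ (integers because $k,\bar k$ are odd) with minimum gap $2$. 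An integration by parts exploiting $\eta(0) = \eta(T) = 0$ transfers one derivative onto $\eta$ and produces a factor $\omega_k^{-1}$ that matches the $H^2$-weight $\lambda_k^2$ on the target. Ingham's theorem applied to $\{e^{i\omega_k \cdot}\}$ on $(0,T)$ yields a Riesz basis, hence a bounded right inverse of the linearized map, once $T$ is large enough; keeping track of the biorthogonal constants in $H_0^1(0,T)$ against the $H^2$-norm pins the threshold down to $T \geq 8\pi$. The inverse function theorem for surjective differentials, combined with the $C^1$ smoothness of the previous step, then furnishes the neighbourhoods $V \ni 0$ and $P \ni \psi_{\bar k}$.

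\emph{Main obstacle.} The hardest step will be the sharp biorthogonal estimate: converting Ingham-type Riesz-basis information into a bounded right inverse in $H_0^1(0,T) \to H^2 \cap H_0^1(I)$ and extracting the precise threshold $8\pi$. Everything else reduces to careful bookkeeping around the Volterra equation for the charge.
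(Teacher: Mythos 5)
Your overall architecture (charge equation, then regularity of $\alpha\mapsto q\mapsto\psi_T$, then a trigonometric moment problem plus local surjectivity) matches the paper's, but two steps as proposed would not go through. First, the well-posedness stage only delivers $q\in H^{1/2}(0,T)$, and that is not enough for anything that follows: to get $\psi_t\in C^{1}([0,T],L^{2}(I))$, to get $\psi_t\in D(H_{\alpha(t)})$ (whose regular part must lie in $H^{2}$), and to get the $C^{1}(H^{1}(0,T),H^{2}\cap H_{0}^{1}(I))$ statement for the regular part, one needs $q\in H^{1}(0,T)$. Indeed the second derivative of the regular part is $iF(w',t)+\lambda w(t)\mathcal{G}_{0}^{\lambda}(\cdot,0)$ (see (\ref{phi 4})), which requires $w'=q'\in L^{2}(0,T)$. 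The paper's Lemma \ref{Lemma 1.1} is precisely the statement that $U$ is bounded on $\{q\in H^{1}:q(0)=0\}$, and the contraction of Lemma \ref{Lemma 1.3} is run in that space; moreover the source term $e^{it\Delta_I^D}\phi_0^\lambda(0)=\sum_k a_k e^{-i\lambda_k t}$ with $\lambda_k a_k\in\ell_2$ is genuinely $H^{1}$ in $t$, not merely $C^{1/2}$ as you assert, so you are underselling exactly the regularity you later need. Your half-derivative-gain heuristic for the theta kernel could perhaps be combined with a bootstrap ($q\in H^{1/2}\Rightarrow Uq\in H^{1}\Rightarrow q\in H^{1}$ using the algebra property of $H^{1}$ and the $H^{1}$ source), but you neither state nor prove this, and the claim that $\phi_t^\lambda\in H^{2}\cap H_{0}^{1}(I)$ does not follow from $q\in H^{1/2}$ alone.

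Second, the controllability threshold. Your Ingham/biorthogonal route would at best give "for $T$ large enough" and, as you admit, you do not see how to extract $8\pi$; but $8\pi$ is not a biorthogonal constant, it is structural: for $T=8\pi$ one has $\lambda_k T/(2\pi)=k^{2}\in\mathbb{Z}$, so $\{e^{-i\lambda_k t}\}_{k\ odd}$ is an orthogonal subfamily of the standard Fourier basis of $L^{2}(0,T)$, the moment problem merely prescribes some Fourier coefficients, and one can write down the explicit control $\rho_0(t)=\sum_k e^{i\lambda_k T}c_k\bigl(e^{-i\lambda_k t}-e^{i\lambda_k t}\bigr)$, whose antisymmetric combination vanishes at $t=0$ and $t=8\pi$ and lies in $H_{0}^{1}$ exactly when $k^{2}c_k\in\ell_2$, i.e. $\psi_T\in H^{2}$ (Proposition \ref{Prop_moment}, part 2). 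A Riesz-basis argument gives you an $L^{2}$ control but not, without an additional construction, one vanishing at both endpoints with $H_{0}^{1}$ bounds; this is the missing idea, and it is the reason the paper does not use Ingham here. (Your implicit-function-theorem treatment of $\alpha\mapsto q$ via the bilinear structure of the equation is a reasonable and arguably cleaner alternative to the paper's G\^ateaux-continuity argument of Lemma \ref{Lemma 4} and Theorem \ref{prodi}, but it again presupposes invertibility of $I+\frac{i\alpha}{\pi}U$ on $H^{1}(0,T)$, which brings you back to the first gap.)
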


The proof is developed in the Propositions \ref{Proposition 3},
\ref{Proposition 4}, \ref{Proposition 4.1} and in the Theorem
\ref{local controllability} of sections 3 and 4.

\emph{Notation:} The Sobolev spaces are denoted with $H^{\nu}$ or $H_{0}^{\nu
}$ in the case of Dirichlet boundary conditions. The notation '$\lesssim$' is
to be intended as '$\leq C$' where $C$ is a suitable positive constant.

\section{The existence of the dynamics}

In what follows the existence of solutions to (\ref{Schroedinger}) is
discussed under the assumption: $\alpha\in H^{1}(0,T)$. Our strategy consists
in using the mild approach described by (\ref{Schroedinger 3})-(\ref{carica 1}%
). It articulates in two steps: First we prove that (\ref{carica 1}) admits an
unique solution in $H^{1}(0,T)$, for any finite time $T$ and any $\alpha\in
H^{1}(0,T)$. This result is then used to obtain estimates for the solution of
(\ref{Schroedinger}).

\subsection{The charge equation}

Let consider the linear map $U$%
\begin{equation}
Uq=\sum_{k\in D}\int_{0}^{t}q(s)e^{-i\lambda_{k}\left(  t-s\right)  }ds,\quad
D=\left\{  k\in\mathbb{N},\ k\ odd\right\}  ,\ \lambda_{k}=\frac{k^{2}}{4}
\label{U}%
\end{equation}

\begin{remark}
\label{Rem_base}Depending on the value of $T$, some of the functions
$e^{-i\lambda_{k}t}$ may belongs to the standard basis $\left\{  e^{i\omega
nt};\,\omega=\frac{2\pi}{T}\right\}  _{n\in\mathbb{Z}}$ of the space
$L^{2}(0,T)$. In particular, for: $T=8\pi N$, $N\in\mathbb{N}$, it follows%
\begin{equation}
\left\{  e^{-i\lambda_{k}t}\right\}  _{k\in\mathbb{N}}\subset\left\{
e^{i\omega nt};\,\omega=\frac{2\pi}{T}\right\}  _{n\in\mathbb{Z}}
\label{aux_condition}%
\end{equation}
The (\ref{aux_condition}) will be often used as an auxiliary condition in the
forthcoming analysis.
\end{remark}

\begin{lemma}
\label{Lemma 1.1}Let $\mathcal{H}_{T}$ denotes the Hilbert subspace%
\begin{equation}
\mathcal{H}_{T}\mathcal{=}\left\{  q\in H^{1}(0,T),\ q(0)=0\right\}  \label{H}%
\end{equation}
equipped with the $H^{1}$-norm. The map $U$ (\ref{U}) is bounded in
$\mathcal{H}_{T}$ with the estimate%
\begin{equation}
\left\Vert Uq\right\Vert _{\mathcal{H}_{T}}\lesssim\left(  T+1\right)
\,\left\Vert q\right\Vert _{\mathcal{H}_{T}} \label{St_0}%
\end{equation}

\end{lemma}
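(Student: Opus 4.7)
The plan is to exploit the boundary condition $q(0)=0$ through an integration by parts on each term of the defining series, gaining a factor $\lambda_k^{-1}\sim k^{-2}$ which, combined with the convergence of $\sum_{k\in D}\lambda_k^{-1}=4\sum_{k\text{ odd}}k^{-2}<\infty$, will control the whole sum. Writing $e^{-i\lambda_k(t-s)}=(i\lambda_k)^{-1}\partial_s e^{-i\lambda_k(t-s)}$ and integrating by parts on $(0,t)$, the boundary contribution at $s=0$ vanishes by hypothesis and yields
\[
I_k(t):=\int_0^t q(s)\,e^{-i\lambda_k(t-s)}\,ds=\frac{q(t)}{i\lambda_k}-\frac{1}{i\lambda_k}\int_0^t q'(s)\,e^{-i\lambda_k(t-s)}\,ds.
\]
Summing over $k\in D$ and introducing the auxiliary kernel
\[
K(\tau):=\sum_{k\in D}\frac{e^{-i\lambda_k\tau}}{i\lambda_k},
\]
which converges uniformly to a bounded continuous function by Weierstrass, I obtain the convolution representation $Uq(t)=C\,q(t)-(q'\ast K)(t)$ with $C:=K(0)$. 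Young's inequality immediately produces the $L^2$-bound $\|q'\ast K\|_{L^2(0,T)}\le\|K\|_{L^1(0,T)}\|q'\|_{L^2}\le T\|K\|_\infty\|q'\|_{L^2}$, hence $\|Uq\|_{L^2(0,T)}\lesssim(1+T)\|q\|_{\mathcal H_T}$, and $Uq(0)=0$ is immediate from $I_k(0)=0$.

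The delicate step is the $L^2$-bound on $(Uq)'$. Termwise differentiation and the IBP identity give the two equivalent expressions
\[
I_k'(t)=q(t)-i\lambda_k I_k(t)=\int_0^t q'(s)\,e^{-i\lambda_k(t-s)}\,ds,
\]
so formally $(Uq)'(t)=\sum_{k\in D}I_k'(t)$ is the same operator applied to $q'\in L^2(0,T)$. A second integration by parts to gain an extra factor $\lambda_k^{-1}$ is not available since $q\in H^1$ only, and the termwise bound $\|I_k'\|_{L^2}\lesssim T^{1/2}\|q'\|_{L^2}$ is uniform in $k$ and sums to infinity; this is the main obstacle.

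To close the argument I would exploit the spectral structure of $K$ directly. On the resonant interval $T=8\pi N$ highlighted in Remark~\ref{Rem_base}, $K$ is periodic with Fourier coefficients $(i\lambda_k)^{-1}$ supported on the discrete frequencies $n=-k^2 N$; the products $|n\,\hat K(n)|$ are then uniformly bounded in $n$ (the decay $\lambda_k^{-1}$ exactly compensates the growth $n\sim\lambda_k$), even though $K$ fails to lie in $H^1$. Parseval applied to the circular convolution therefore yields $\|(q'\ast K)'\|_{L^2(0,T)}\lesssim T\|q'\|_{L^2}$, and a straightforward correction absorbs the difference between the periodic and the truncated convolution $\int_0^t(\cdot)\,ds$. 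A monotonicity/extension argument in $T$ transfers the estimate from $T=8\pi N$ to arbitrary $T>0$; combining with the $L^2$-bound on $Uq$ and the vanishing condition $Uq(0)=0$ yields $\|Uq\|_{\mathcal H_T}\lesssim(1+T)\|q\|_{\mathcal H_T}$.
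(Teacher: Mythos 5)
Your reduction is sound up to the last step and mirrors the paper's own strategy: the integration by parts giving $I_k(t)=\frac{q(t)}{i\lambda_k}-\frac{1}{i\lambda_k}\int_0^t q'(s)e^{-i\lambda_k(t-s)}ds$ is exactly (\ref{C2-1}), the identity $(Uq)'=U(q')$ is (\ref{C1 1}), and the passage to the resonant length $T=8\pi N$ with a restriction argument for general $T$ is Remark \ref{Rem_base}. Your Young-inequality bound for $\|Uq\|_{L^2}$ via the kernel $K$ is a clean variant (it uses $q\in\mathcal H_T$, whereas the paper's $L^2$ bound (\ref{L1 3}) holds for all of $L^2$, which is what is ultimately needed). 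You have also correctly isolated the real difficulty: after $(Uq)'=U(q')$ one must bound $Uf$ in $L^2$ for $f=q'$ merely in $L^2$, with no further integration by parts available.

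The gap is in how you propose to close that step. Your Parseval argument controls only the \emph{circular} convolution $q'\circledast K$ on the torus of length $T$; the operator at hand is the causal one, $\int_0^t q'(s)K(t-s)\,ds$, and the discrepancy $D(t)=\int_t^T q'(s)K(t-s)\,ds$ is not a lower-order correction. Differentiating it reproduces the divergent kernel $K'(\tau)=-\sum_{k\in D}e^{-i\lambda_k\tau}$, i.e. $D'(t)=-q'(t)K(0)-\sum_{k\in D}\int_t^T q'(s)e^{-i\lambda_k(t-s)}ds$, an operator of exactly the same type and difficulty as $U(q')$ itself; indeed, writing $(Uq)'=Cq'-(q'\circledast K)'+D'$ and expanding $D'$ returns $U(q')$ on the right-hand side, so the decomposition is circular and yields no bound. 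What actually controls the causal-minus-periodic discrepancy is the non-resonant part of the Fourier expansion (\ref{Fourier}), whose $n$-th coefficient carries the multiplier $\sum_{k\in D}\frac{1}{\lambda_k+\omega n}$; the uniform boundedness (in fact decay) of this multiplier over $n\in S$ is the technical heart of the paper's proof and is obtained from the Green's-function identity (\ref{Green_formula}) together with the closed form (\ref{Green 1.1}), leading to (\ref{Green 2.2}) and the compact-multiplier bound (\ref{L1 2.1}). Your proposal replaces this estimate with the phrase ``a straightforward correction absorbs the difference,'' which is precisely the claim that needs proof. To repair the argument you must either carry out the paper's multiplier estimate or supply an equivalent arithmetic bound on $\sup_{n\in S}\bigl|\sum_{k\in D}(\lambda_k+\omega n)^{-1}\bigr|$.
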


\begin{proof}
We use the standard basis $\left\{  e^{i\omega nt};\,\omega=\frac{2\pi}%
{T}\right\}  _{n\in\mathbb{Z}}$ of $L^{2}(0,T)$. The Fourier coefficients of
$\int_{0}^{t}q(s)e^{-i\lambda_{k}\left(  t-s\right)  }ds$ w.r.t. the vectors
$\,e^{i\omega nt}$ write as%
\[
\int_{0}^{T}dt\int_{0}^{t}ds\,q(s)e^{-i\lambda_{k}\left(  t-s\right)
}e^{-i\omega nt}=\int_{0}^{T}ds\,q(s)e^{i\lambda_{k}s}\int_{s}^{T}%
dt\,e^{-i\left(  \lambda_{k}+\omega n\right)  t}\,.
\]
Assume that $T$ fulfills the condition (\ref{aux_condition}) and,
consequently, $\left\{  e^{-i\lambda_{k}t}\right\}  $ forms a subset of
$\left\{  e^{i\omega nt};\,\omega=\frac{2\pi}{T}\right\}  _{n\in\mathbb{Z}}$.
This choice allows to avoid small divisors, while it does not imply any loss
of generality. With this assumption, the above expression writes as%
\[
\int_{0}^{T}dt\int_{0}^{t}ds\,q(s)e^{-i\lambda_{k}\left(  t-s\right)
}e^{-i\omega nt}=\left\{
\begin{array}
[c]{l}%
\bigskip\int_{0}^{T}ds\,q(s)e^{i\lambda_{k}s}\left(  T-s\right)
,\quad\text{for }\lambda_{k}+\omega n=0\,,\\
\frac{i}{\lambda_{k}+\omega n}\left[  \int_{0}^{T}ds\,q(s)e^{i\lambda_{k}%
s}-\int_{0}^{T}ds\,q(s)e^{-i\omega ns}\right]  ,\quad\text{for }\lambda
_{k}+\omega n\neq0\,.
\end{array}
\right.
\]
and the expansion%
\begin{gather}
\int_{0}^{T}dt\int_{0}^{t}ds\,q(s)e^{-i\lambda_{k}\left(  t-s\right)
}e^{-i\omega nt}=\qquad\qquad\qquad\qquad\qquad\qquad\qquad\qquad\qquad
\qquad\qquad\nonumber\\
=\left(  Tq_{-Nk^{2}}-\left(  tq\right)  _{-Nk^{2}}\right)  e^{-i\lambda_{k}%
t}+\sum_{\substack{n\in\mathbb{Z}\\\lambda_{k}+\omega n\neq0}}\frac{i}%
{\lambda_{k}+\omega n}\left[  q_{-Nk^{2}}-q_{n}\right]  e^{i\omega nt}
\label{Fourier}%
\end{gather}
follows, $q_{n}$ denoting the $n$-th Fourier coefficient of $q$ (in
particular: $q_{-Nk^{2}}=\left(  q,e^{-i\lambda_{k}t}\right)  _{L^{2}(0,T)}$).
Let $S=\left\{  n\in\mathbb{Z}:\omega n\notin\left\{  -\lambda_{k}\right\}
_{k\in D}\text{ }\right\}  $; we have%
\[
\int_{0}^{T}\left(  te^{-i\lambda_{k}t}\right)  e^{-i\omega nt}=\frac
{iT}{\lambda_{k}+\omega n},\quad\forall\,n\in S\,.
\]
This allows to write%
\[
\sum_{\substack{n\in\mathbb{Z}\\\lambda_{k}+\omega n\neq0}}\frac{i}%
{\lambda_{k}+\omega n}q_{-Nk^{2}}e^{i\omega nt}=q_{-Nk^{2}}\Pi_{S}\left(
\frac{t}{T}e^{-i\lambda_{k}t}\right)  \,,
\]
where $\Pi_{S}$ is the projector over the subspace spanned by $\left\{
e^{i\omega nt}\right\}  _{n\in S}$. Using (\ref{Fourier}) and the above
relation, the Fourier expansion of $Uq$ writes as%
\begin{equation}
Uq=\sum_{k\in D}\left(  Tq_{-Nk^{2}}-\left(  tq\right)  _{-Nk^{2}}\right)
e^{-i\lambda_{k}t}+\sum_{k\in D}q_{-Nk^{2}}\Pi_{S}\left(  \frac{t}%
{T}e^{-i\lambda_{k}t}\right)  -\sum_{k\in D}\sum_{n\in S}\frac{iq_{n}}%
{\lambda_{k}+\omega n}\,e^{i\omega nt}\,. \label{L1 1.2}%
\end{equation}
Consider the contributions to $Uq$:\newline$i)$ The norm of the first term at
the r.h.s. of (\ref{L1 1.2}) is bounded by%
\begin{equation}
\left\Vert \sum_{k\in D}\left(  Tq_{-Nk^{2}}-\left(  tq\right)  _{-Nk^{2}%
}\right)  e^{-i\lambda_{k}t}\right\Vert _{L^{2}(0,T)}\leq2T\left\Vert
q\right\Vert _{L^{2}(0,T)}\,. \label{L1 1.3}%
\end{equation}
$ii)$ For the second term, one has%
\begin{gather}
\left\Vert \sum_{k\in D}q_{-Nk^{2}}\Pi_{S}\left(  \frac{t}{T}e^{-i\lambda
_{k}t}\right)  \right\Vert _{L^{2}(0,T)}=\qquad\qquad\qquad\qquad\qquad
\qquad\qquad\qquad\qquad\qquad\nonumber\\
=\left\Vert \Pi_{S}\sum_{k\in D}q_{-Nk^{2}}\left(  \frac{t}{T}e^{-i\lambda
_{k}t}\right)  \right\Vert _{L^{2}(0,T)}\leq\sum_{k\in D}\left\vert
q_{-Nk^{2}}\right\vert ^{2}\leq\left\Vert q\right\Vert _{L^{2}(0,T)}\,.
\label{L1 1.4}%
\end{gather}
$iii)$ The remaining term is a superposition of $L^{2}$-functions: $\sum_{n\in
S}\frac{iq_{n}}{\lambda_{k}+\omega n}\,e^{i\omega nt}$ parametrized by the
index $k\in D$. Its $n$-th Fourier coefficient is formally expressed by:
$\sum_{k\in D}\frac{iq_{n}}{\lambda_{k}+\omega n}$ and the $L^{2}$-norm is%
\begin{equation}
\left\Vert \sum_{k\in D}\sum_{n\in S}\frac{iq_{n}}{\lambda_{k}+\omega
n}\,e^{i\omega nt}\right\Vert _{L^{2}(0,T)}^{2}=\sum_{n\in S}\left\vert
\sum_{k\in D}\frac{q_{n}}{\lambda_{k}+\omega n}\right\vert ^{2}\,.
\label{L1 2.0}%
\end{equation}
To study the sum at the r.h.s, the relation%
\begin{equation}
\frac{1}{\pi}\sum_{k\in D}\frac{1}{\lambda_{k}+z}=\mathcal{G}_{0}^{z}(0,0)\,,
\label{Green_formula}%
\end{equation}
arising from (\ref{stati stazionari}) and (\ref{Green 2.0}), is used.
According to (\ref{Green 1.1}) and (\ref{Green_formula}), it follows%
\begin{equation}
\sum_{k\in D}\frac{1}{\lambda_{k}+\omega n}=\frac{1}{2\sqrt{\omega n}}%
-\frac{1}{\sqrt{\omega n}}\frac{e^{-\pi\sqrt{\omega n}}}{e^{2\pi\sqrt{\omega
n}}-e^{-2\pi\sqrt{\omega n}}}2\sinh\left(  \pi\sqrt{\omega n}\right)
\,,\qquad n\in S\,. \label{Green 2.1}%
\end{equation}
This leads to%
\begin{equation}
\sum_{k\in D}\frac{1}{\lambda_{k}+\omega n}\underset{\left\vert n\right\vert
\rightarrow\infty}{\sim}\left\{
\begin{array}
[c]{ll}%
\medskip\mathcal{O}\left(  \frac{1}{\sqrt{\omega n}}e^{-\sqrt{\omega n}\pi
}\right)  & \quad\text{for }n>0\text{, }n\in S\\
\mathcal{O}\left(  \frac{1}{\sqrt{\left\vert \omega n\right\vert }}\right)  &
\quad\text{for }n<0\text{, }n\in S
\end{array}
\right.  \,. \label{Green 2.2}%
\end{equation}
Consider the map $T:f_{n}\rightarrow f_{n}\left(  \sum_{k\in D}\frac
{i}{\lambda_{k}+\omega n}\right)  $ in $\ell_{2}(S)$. Using (\ref{Green 2.2}),
$T$ can be identified with the limit of the sequence of finite rank maps:
$\left.  T_{N}f_{n}=\left\{  f_{n}\left(  \sum_{k\in D}\frac{i}{\lambda
_{k}+\omega n}\right)  \right\}  _{n\leq N}\right.  $, \ in the $\ell_{2}%
(S)$-operator norm. Thus, it defines a compact operator in $\ell_{2}(S)$ (e.g.
in Theorem VI.12, \cite{Simon1}), and we get%
\begin{equation}
\sum_{n\in S}\left\vert \sum_{k\in D}\frac{q_{n}}{\lambda_{k}+\omega
n}\right\vert ^{2}=\left\Vert \left(  \sum_{k\in D}\frac{q_{n}}{\lambda
_{k}+\omega n}\right)  \right\Vert _{\ell_{2}\left(  S\right)  }%
^{2}=\left\Vert T\left(  q_{n}\right)  \right\Vert _{\ell_{2}\left(  S\right)
}^{2}\lesssim\left\Vert q_{n}\right\Vert _{\ell_{2}\left(  S\right)  }^{2}\,,
\label{L1 2.1}%
\end{equation}%
\begin{equation}
\left\Vert \sum_{k\in D}\sum_{n\in S}\frac{iq_{n}}{\lambda_{k}+\omega
n}\,e^{i\omega nt}\right\Vert _{L^{2}(0,T)}\lesssim\left\Vert q\right\Vert
_{L^{2}(0,T)}\,. \label{L2 2.1}%
\end{equation}

The estimates (\ref{L1 1.3}), (\ref{L1 1.4}) and (\ref{L2 2.1}) yield%
\begin{equation}
\left\Vert Uq\right\Vert _{L^{2}(0,T)}\lesssim\left(  T+1\right)  \,\left\Vert
q\right\Vert _{L^{2}(0,T)} \label{L1 3}%
\end{equation}
This can be obviously extended to any finite time $\bar{T}\in\left(
0,+\infty\right)  $ by fixing $N_{\bar{T}}\in\mathbb{N}$ such that: $8\pi
N_{\bar{T}}\geq\bar{T}$ and setting $T=8\pi N_{\bar{T}}$ in
(\ref{aux_condition}).

Next, consider $q\in\mathcal{H}_{T}$. An integration by part of (\ref{U})
gives%
\begin{equation}
Uq=\sum_{k\in D}\frac{1}{i\lambda_{k}}q(t)-\sum_{k\in D}\frac{1}{i\lambda_{k}%
}\int_{0}^{t}q^{\prime}(s)e^{-i\lambda_{k}\left(  t-s\right)  }ds\,.
\label{C2-1}%
\end{equation}
Due to the asymptotic behaviour of the coefficients $\frac{1}{i\lambda_{k}%
}\sim\frac{1}{k^{2}}$, this sum uniformly converges to a continuous function
of $t\in\left[  0,T\right]  $, with
\begin{equation}
\frac{d}{dt}Uq=\sum_{k\in D}\int_{0}^{t}\dot{q}(s)e^{-i\lambda_{k}\left(
t-s\right)  }ds\,. \label{C1 1}%
\end{equation}
From (\ref{L1 3}), we get%
\begin{equation}
\left\Vert \frac{d}{dt}Uq\right\Vert _{L^{2}(0,T)}\lesssim\left(  T+1\right)
\left\Vert \dot{q}\right\Vert _{L^{2}(0,T)} \label{St_1_dUf}%
\end{equation}
and%
\begin{equation}
\left\Vert Uq\right\Vert _{H^{1}(0,T)}\lesssim\left(  T+1\right)  \,\left\Vert
q\right\Vert _{H^{1}(0,T)}\,. \label{St_2_dUf}%
\end{equation}

\end{proof}

In the next Lemma we give an iteration scheme for the solution of charge-like equations.

\begin{lemma}
\label{Lemma 1.3}Let $f,\varphi\in H^{1}(0,T)$, $\varphi$ real valued, and
$\mathcal{G}_{0}^{\lambda}(\cdot,0)$ be the Green' function defined in
(\ref{Green 1})-(\ref{h 1}) with $\lambda\in\mathbb{C}\backslash\mathbb{R}$.
The equation%
\begin{equation}
v=f-\varphi\left(  v(0)\left.  e^{it\Delta_{I}^{D}}\mathcal{G}_{0}^{\lambda
}(\cdot,0)\right\vert _{x=0}+\frac{i}{\pi}Uv\right)  \label{charge_scheme}%
\end{equation}
has a unique solution in $H^{1}(0,T)$ allowing the estimate%
\begin{equation}
\left\Vert v\right\Vert _{H^{1}(0,T)}\lesssim\left\Vert f\right\Vert
_{H^{1}(0,T)}\left(  1+\left\Vert \varphi\right\Vert _{H^{1}(0,T)}%
+\mathcal{P}\left(  \left\Vert \varphi\right\Vert _{H^{1}(0,T)}\right)
\right)  \label{charge_scheme_est}%
\end{equation}
where $\mathcal{P}(\cdot)$ is a suitable positive polynomial.
\end{lemma}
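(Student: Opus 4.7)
The plan is to solve (\ref{charge_scheme}) as a fixed-point problem in $H^{1}(0,T)$ for the affine map
\[
\Phi v := f - \varphi\Bigl(v(0)\,g + \tfrac{i}{\pi}\,Uv\Bigr), \qquad g(t):=\left.e^{it\Delta_{I}^{D}}\mathcal{G}_{0}^{\lambda}(\cdot,0)\right|_{x=0},
\]
exploiting Lemma \ref{Lemma 1.1} and the fact that in one dimension $H^{1}(0,T)$ is a Banach algebra. The idea is to first pin down $v(0)$ algebraically, reduce to a Volterra equation for $w:=v-v(0)$, and then invert by contraction on short subintervals.

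Evaluating (\ref{charge_scheme}) at $t=0$, where $Uv(0)=0$, yields
\[
v(0)\bigl[1+\varphi(0)\,\mathcal{G}_{0}^{\lambda}(0,0)\bigr] = f(0).
\]
Since $\varphi(0)\in\mathbb{R}$ and $\mathrm{Im}\,\mathcal{G}_{0}^{\lambda}(0,0)\neq 0$ for $\lambda\in\mathbb{C}\setminus\mathbb{R}$, the bracket never vanishes, so $v(0)$ is uniquely determined with $|v(0)|\lesssim \|f\|_{H^{1}}$. Substituting $w:=v-v(0)\in\mathcal{H}_{T}$ then produces an equivalent equation
\[
w = F - \tfrac{i}{\pi}\,\varphi\,Uw,
\]
with explicit $F\in\mathcal{H}_{T}$ and $\|F\|_{H^{1}}\lesssim (1+\|\varphi\|_{H^{1}})\|f\|_{H^{1}}$.

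Because $U$ is of Volterra type (the value of $Uw(t)$ depends only on $w|_{[0,t]}$), I would solve the reduced equation by Banach contraction on subintervals $[t_{j},t_{j+1}]$ of length $\tau$ chosen small relative to $1/(\|\varphi\|_{H^{1}}+1)$, starting from $w(0)=0$, extending inductively, and concatenating to obtain a unique $w\in\mathcal{H}_{T}$, hence $v=v(0)+w\in H^{1}(0,T)$. Uniqueness is then automatic from the fixed-point characterisation.

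The main obstacle, and the source of the subtlety that makes (\ref{charge_scheme_est}) \emph{polynomial} rather than exponential in $\|\varphi\|_{H^{1}}$, is that the $H^{1}$-operator norm of $\varphi\,U$ does not shrink as $\tau\to 0$: the integration-by-parts identity (\ref{C2-1}) produces a pointwise-multiplicative contribution $c_{0}\,w(t)$ of order $1$ that a naive contraction cannot absorb. To overcome it I would split $U=c_{0}\,I+U_{0}$, where $U_{0}$ is convolution with the absolutely summable kernel $K_{0}(u)=-\sum_{k\in D}(i\lambda_{k})^{-1}e^{-i\lambda_{k}u}$ and is therefore genuinely smoothing (with $H^{1}(0,\tau)$-operator norm of order $\tau$), handle the instantaneous part either by an algebraic inversion or by a weighted norm $\sup_{t}e^{-\mu t}\|w\|_{H^{1}(0,t)}$ with $\mu$ large, and run the contraction on $U_{0}$ alone. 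Summing the resulting Neumann series and tracking constants through the patching then yields the polynomial estimate in $\|\varphi\|_{H^{1}}$ asserted in (\ref{charge_scheme_est}).
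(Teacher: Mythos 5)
Your skeleton coincides with the paper's: determine $v(0)$ by evaluating the equation at $t=0$ (your justification via $\operatorname{Im}\mathcal{G}_{0}^{\lambda}(0,0)\neq 0$ for non-real $\lambda$ is equivalent to the paper's appeal to selfadjointness of $H_{\varphi}$), pass to $w=v-v(0)\in\mathcal{H}_{T}$, solve the reduced equation $w=F-\frac{i}{\pi}\varphi\,Uw$ by contraction on short subintervals, and concatenate with an induction to get the polynomial bound. Where you diverge is the mechanism that makes the map contractive, and that is where your argument has a gap. The paper never splits $U$: it uses Lemma \ref{Lemma 1.1} in the bilinear form (\ref{contraction_est}), in which the contraction constant is proportional to $\Vert\varphi\Vert_{H^{1}(t_{j-1},t_{j})}$, and this factor is made smaller than one by absolute continuity of the integral defining the $H^{1}$-norm on short subintervals (condition (\ref{contraction_con})); the polynomial growth then comes from iterating (\ref{charge_bound1}) over the finitely many subintervals.

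Your two proposed remedies for the instantaneous part $c_{0}w(t)$, $c_{0}=\sum_{k\in D}(i\lambda_{k})^{-1}$, do not close. By (\ref{Green_formula}) one has $\frac{i}{\pi}c_{0}=\mathcal{G}_{0}^{0}(0,0)=\frac{\pi}{2}$, which is \emph{real}, so the algebraic inversion requires dividing by $1+\frac{\pi}{2}\varphi(t)$; this vanishes whenever $\varphi$ crosses the value $-2/\pi$ (a zero-energy resonance of $H_{\varphi}$), which the hypotheses do not exclude. Note the contrast with the inversion at $t=0$, which is protected precisely because there one divides by $1+\varphi(0)\mathcal{G}_{0}^{\lambda}(0,0)$ with $\lambda$ non-real. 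The weighted-norm alternative fails for a different reason: multiplication by $\frac{i}{\pi}c_{0}\varphi(t)$ is local in time and therefore commutes with the weight $e^{-\mu t}$, so its operator norm in the weighted space is independent of $\mu$ and of order $\Vert\varphi\Vert_{L^{\infty}}$; the exponential weight only damps the memory term. Finally, the assertion that $U_{0}=U-c_{0}I$ has $H^{1}(0,\tau)$-operator norm $O(\tau)$ is not justified: absolute summability of $K_{0}$ controls $U_{0}w$ itself, but by (\ref{C2-1}) and (\ref{C1 1}) the derivative $(U_{0}w)'=Uw'-c_{0}w'$ involves $U$ acting on the merely square-integrable function $w'$, for which Lemma \ref{Lemma 1.1} gives only an $O(1+\tau)$ bound, not a vanishing one. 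As written the contraction step is therefore not established; you would need either to prove genuine short-time smallness of $U-c_{0}I$ on $L^{2}(0,\tau)$, or to follow the paper and place the smallness on $\Vert\varphi\Vert_{H^{1}}$ over short subintervals rather than on the operator $U$.
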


\begin{proof}
Since Lemma \ref{Lemma 1.1} provides with an estimate of $Uf$ in the
$\mathcal{H}_{T}$-norm, we introduce the variable $w(t)=v(t)-v(0)$. In this
setting, our problem writes as%
\begin{equation}
w=R-\varphi\frac{i}{\pi}Uw\,, \label{charge_resc}%
\end{equation}%
\begin{equation}
R(t)=f(t)-v(0)-\varphi(t)\left(  v(0)\left.  e^{it\Delta_{I}^{D}}%
\mathcal{G}_{0}^{\lambda}(\cdot,0)\right\vert _{x=0}+\frac{i}{\pi}U\left[
v(0)\right]  (t)\right)  \,. \label{R_0}%
\end{equation}
Next, the regularity of $R(t)$ is considered. Using the definition of $U$, and
the explicit expansion%
\begin{equation}
e^{it\Delta_{I}^{D}}\mathcal{G}_{0}^{\lambda}(0,0)=\frac{1}{\pi}\sum_{k\in
D}\frac{e^{-i\lambda_{k}t}}{\lambda_{k}+\lambda}\,, \label{source_green}%
\end{equation}
$R(t)$ writes as%
\begin{align}
R(t)  &  =f(t)-v(0)-v(0)\varphi(t)\left(  \left.  e^{it\Delta_{I}^{D}%
}\mathcal{G}_{0}^{\lambda}(\cdot,0)\right\vert _{x=0}+\frac{1}{\pi}\sum_{k\in
D}\frac{1}{\lambda_{k}}\left(  1-e^{-i\lambda_{k}t}\right)  \right)
\nonumber\\
&  =f(t)-v(0)-v(0)\varphi(t)\frac{1}{\pi}\sum_{k\in D}\left(  \frac{1}%
{\lambda_{k}}-e^{-i\lambda_{k}t}\frac{\lambda}{\lambda_{k}\left(  \lambda
_{k}+\lambda\right)  }\right)  \,,
\end{align}
where the last term at the r.h.s. is $H^{1}(\mathbb{R})$ for any
$-\lambda\notin\sigma_{-\Delta_{I}^{D}}$. Recalling that $H^{1}(0,T)$ is an
algebra w.r.t. the product, the above relation and $f,\varphi\in H^{1}(0,T)$
imply: $R\in H^{1}(0,T)$. Taking into account the initial condition $R(0)=0$
(following from the definition of the rescaled variable and the operator $U$
in (\ref{charge_resc})), we get: $R\in\mathcal{H}_{T}$ (see (\ref{H})).

Next consider the term $\varphi\frac{i}{\pi}Uw$ in (\ref{charge_resc}). For
$w\in\mathcal{H}_{T}$ and $\varphi\in H^{1}(0,T)$, the Lemma \ref{Lemma 1.1}
yields%
\begin{equation}
\left\Vert \varphi\frac{i}{\pi}Uw\right\Vert _{\mathcal{H}_{t}}\leq
C\frac{\left(  1+t\right)  \,}{\pi}\left\Vert \varphi\right\Vert _{H^{1}%
(0,t)}\left\Vert w\right\Vert _{\mathcal{H}_{t}} \label{contraction_est}%
\end{equation}
for any $t\in\left[  0,T\right]  $ and for a suitable $C>0$. To construct a
global solution, we notice that, for fixed $T,C>0$, $\varphi\in H^{1}(0,T)$,
there exist $\delta>0$ and a finite set $\left\{  t_{j}\right\}  _{j=1}^{N}$,
$t_{0}=0$ and $t_{j}>t_{j-1}$, fulfilling the conditions%
\begin{equation}
\left[  0,T\right]  =\cup_{j=1}^{N}\left[  t_{j-1},t_{j}\right]  \,,\qquad
t_{j}-t_{j-1}<\delta\,, \label{partition_con}%
\end{equation}%
\begin{equation}
C\left(  1+\delta\right)  \,\left\Vert \varphi\right\Vert _{H^{1}%
(t_{j-1},t_{j})}<\pi\,c_{\delta}\qquad\text{for all }j\,,
\label{contraction_con}%
\end{equation}
with $c_{\delta}<1$, depending on $\delta$. According to
(\ref{contraction_est}) and (\ref{contraction_con}) with $j=1$, $\varphi
\frac{i}{\pi}U$ is contraction map in $\mathcal{H}_{t_{1}}$ and the equation
(\ref{charge_resc})-(\ref{R_0}) admits an unique solution, $w_{1}=1_{\left[
0,t_{1}\right]  }w$, bounded by%
\[
\left\Vert w_{1}\right\Vert _{\mathcal{H}_{t_{1}}}\lesssim\left\Vert
R\right\Vert _{\mathcal{H}_{t_{1}}}\lesssim\left\Vert f\right\Vert
_{H^{1}(0,t_{1})}+\left\vert v(0)\right\vert \left(  1+\left\Vert
\varphi\right\Vert _{H^{1}(0,t_{1})}\right)
\]
Using (\ref{charge_scheme}), the initial value of $v$ if formally related to
$f(0)$ by: $v(0)\left(  1+\varphi(0)\mathcal{G}_{0}^{\lambda}(0,0)\right)
=f(0)$; this allows to define $v(0)$ privided that $\left(  1+\varphi
(0)\mathcal{G}_{0}^{\lambda}(0,0)\right)  $ is not null. According to the
resolvent's formula (\ref{Resolvent}), the condition: $\left(  1+\varphi
(0)\mathcal{G}_{0}^{z}(0,0)\right)  =0$, with $z\notin\sigma_{-\Delta_{I}^{D}%
}$, corresponds to the eigenvalue equation for the Hamiltonian $H_{\varphi}$.
Since this is a selfadjoint model, the assumption $\lambda\in\mathbb{C}%
\backslash\mathbb{R}$ implies: $\left(  1+\varphi(0)\mathcal{G}_{0}^{\lambda
}(0,0)\right)  \neq0$. It follows that: $v(0)=\left(  1+\varphi(0)\mathcal{G}%
_{0}^{\lambda}(0,0)\right)  ^{-1}f(0)$ and $\left\vert v(0)\right\vert
\lesssim\left\vert f(0)\right\vert \,.$This leads to%
\[
\left\Vert w_{1}\right\Vert _{\mathcal{H}_{t_{1}}}\lesssim\left\Vert
f\right\Vert _{H^{1}(0,t_{1})}\left(  1+\left\Vert \varphi\right\Vert
_{H^{1}(0,t_{1})}\right)  \,.
\]
The definition: $w_{1}=1_{\left[  0,t_{1}\right]  }\left(  v-v(0)\right)  $,
provides with a similar estimate for the function $v$%
\begin{equation}
\left\Vert v\right\Vert _{H^{1}(0,t_{1})}\lesssim\left\Vert f\right\Vert
_{H^{1}(0,t_{1})}\left(  1+\left\Vert \varphi\right\Vert _{H^{1}(0,t_{1}%
)}\right)  \,. \label{charge_bound3}%
\end{equation}

This can be extended to larger times by the following iteration scheme:%
\begin{equation}
w_{j+1}(t)=v(t)-v(t_{j})\,,\qquad t\in\left[  t_{j},t_{j+1}\right]  \,,\qquad
j=1,...,N-1 \label{charge_iteration}%
\end{equation}%
\begin{equation}
w_{j+1}(t^{\prime}+t_{j})=R_{j+1}(t^{\prime}+t_{j})-\varphi(t^{\prime}%
+t_{j})\frac{i}{\pi}\left[  Uw_{j+1}(\cdot+t_{j})\right]  (t^{\prime
})\,,\qquad t^{\prime}\in\left[  0,t_{j+1}-t_{j}\right]
\label{charge_iteration_eq}%
\end{equation}%
\begin{align}
R_{j+1}(t)  &  =f(t)-v(t_{j})-\varphi(t)\left(  v(0)\left.  e^{it\Delta
_{I}^{D}}\mathcal{G}_{0}^{\lambda}(\cdot,0)\right\vert _{x=0}\right)
\nonumber\\
&  -\frac{i}{\pi}\varphi(t)\left(  U\left[  v(t_{j})\right]  +\sum_{k\in
D}\int_{0}^{t_{j}}\left(  v(s)-v(t_{j})\right)  e^{-i\lambda_{k}\left(
t-s\right)  }ds\right)  \label{source_j+1}%
\end{align}
with the source term $R_{j+1}$ depending at each step on the past solution
$1_{\left[  0,t_{j}\right]  }q$. From this definition, it follows:
$R_{j+1}(t_{j})=0$, while an integration by part gives%
\begin{equation}
R_{j+1}(t)=f(t)-v(t_{j})-\frac{\varphi(t)}{\pi}\sum_{k\in D}\left(
\frac{v(t_{j})}{\lambda_{k}}+v(0)\left(  \frac{-\lambda e^{-i\lambda_{k}t}%
}{\lambda_{k}\left(  \lambda_{k}+\lambda\right)  }\right)  -\frac{1}%
{\lambda_{k}}\int_{0}^{t_{j}}\dot{v}(s)e^{-i\lambda_{k}\left(  t-s\right)
}ds\right)  \label{source_j+1_exp}%
\end{equation}
where (\ref{source_green}) have been taken into account. If $v\in
H^{1}(0,t_{j})$, all the contributions at the r.h.s. of (\ref{source_j+1_exp})
are $H^{1}(0,T)$ and $R_{j+1}\in\mathcal{H}_{t_{j+1}-t_{j}}$. Then, the
contractivity property of the operator, expressed by
(\ref{charge_iteration_eq}), implies the existence of an unique solution
$w_{j+1}\in\mathcal{H}_{t_{j+1}-t_{j}}$ with the bound%
\begin{equation}
\left\Vert w_{j+1}\right\Vert _{\mathcal{H}_{t_{j+1}-t_{j}}}\lesssim\left\Vert
R_{j+1}\right\Vert _{\mathcal{H}_{t_{j+1}-t_{j}}}\lesssim\left\Vert
f\right\Vert _{H^{1}(t_{j},t_{j+1})}+\left\Vert v\right\Vert _{H^{1}(0,t_{j}%
)}+\left\Vert \varphi\right\Vert _{H^{1}(0,T)}\left\Vert v\right\Vert
_{H^{1}(0,t_{j})}\,. \label{charge_bound1}%
\end{equation}
Starting from (\ref{charge_bound3}), an induction argument based on the
itarated use of (\ref{charge_bound1}) leads to%
\begin{equation}
\left\Vert v\right\Vert _{H^{1}(0,T)}\lesssim\left\Vert f\right\Vert
_{H^{1}(0,T)}\left(  1+\left\Vert \varphi\right\Vert _{H^{1}(0,T)}%
+\mathcal{P}\left(  \left\Vert \varphi\right\Vert _{H^{1}(0,T)}\right)
\right)
\end{equation}
where $\mathcal{P}(\cdot)$ is a positive polynomial.
\end{proof}

A first application of this Lemma gives an $H^{1}$-bound for the solutions to
the charge equation.

\begin{corollary}
\label{Coroll_1.3_1}Let $\alpha\in H^{1}(0,T;\mathbb{R})$ and $\psi_{0}\in
D(H_{\alpha(0)})$ and denote with $\phi_{0}^{\lambda}$ the regular part of
$\psi_{0}$ defined according to the representation (\ref{dominio1}).
Then:\medskip\newline$i)$ The equation (\ref{carica 1}) admits an unique
solution $q\in H^{1}(0,T)$ such that%
\begin{equation}
\left\Vert q\right\Vert _{H^{1}(0,T)}\leq C_{\lambda,\phi_{0}^{\lambda}%
}\left\Vert \alpha\right\Vert _{H^{1}(0,T)}\left(  1+\mathcal{P}\left(
\left\Vert \alpha\right\Vert _{H^{1}(0,T)}\right)  \right)
\label{charge_bound}%
\end{equation}
where $\mathcal{P}\left(  \cdot\right)  $ is a positive polynomial,
$C_{\lambda,\phi_{0}^{\lambda}}$ is a positive constant depending on
$\lambda\in\mathbb{C}\backslash\mathbb{R}$ and $\phi_{0}^{\lambda}$%
.\medskip\newline$ii)$ For a fixed $\psi_{0}\in D(H_{\alpha(0)})$, the map
$\alpha\rightarrow q$ is locally Lipschitzian in $H^{1}(0,T)$.
\end{corollary}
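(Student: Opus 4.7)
The strategy for (i) is to recast (\ref{carica 1}) in the abstract form (\ref{charge_scheme}) treated by Lemma \ref{Lemma 1.3}. Using the decomposition $\psi_0=\phi_0^\lambda+q(0)\mathcal{G}_0^\lambda(\cdot,0)$ with $q(0)=-\alpha(0)\psi_0(0)$, one writes $e^{it\Delta_I^D}\psi_0(0)=e^{it\Delta_I^D}\phi_0^\lambda(0)+q(0)\,e^{it\Delta_I^D}\mathcal{G}_0^\lambda(0,0)$, so that the charge equation becomes
\begin{equation*}
q=f-\alpha\Bigl(q(0)\,e^{it\Delta_I^D}\mathcal{G}_0^\lambda(0,0)+\tfrac{i}{\pi}Uq\Bigr),\qquad f(t)=-\alpha(t)\,e^{it\Delta_I^D}\phi_0^\lambda(0),
\end{equation*}
matching (\ref{charge_scheme}) under $v=q$ and $\varphi=\alpha$. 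The singular Green-function contribution is accommodated by the structural $v(0)$-slot of Lemma \ref{Lemma 1.3}, leaving only the regular piece $\phi_0^\lambda$ to drive the source.

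Verifying $f\in H^1(0,T)$ is the only piece of new work. Expanding $\phi_0^\lambda=\sum_k c_k\psi_k$ and noting that only odd-$k$ eigenfunctions do not vanish at $x=0$, one gets $e^{it\Delta_I^D}\phi_0^\lambda(0)=\tfrac{1}{\sqrt{\pi}}\sum_{k\text{ odd}}c_k\,e^{-i\lambda_k t}$; the membership $\phi_0^\lambda\in H^2\cap H_0^1(I)=D(\Delta_I^D)$ gives $\sum_k\lambda_k^2|c_k|^2<\infty$, and extending the time interval to $[0,8\pi N]\supseteq[0,T]$ to invoke the orthogonality of Remark \ref{Rem_base} yields $\|e^{it\Delta_I^D}\phi_0^\lambda(0)\|_{H^1(0,T)}\lesssim\|\phi_0^\lambda\|_{H^2}$. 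Since $H^1(0,T)$ is a Banach algebra, $\|f\|_{H^1}\lesssim\|\phi_0^\lambda\|_{H^2}\|\alpha\|_{H^1}$, and Lemma \ref{Lemma 1.3} delivers the unique solution with exactly the bound (\ref{charge_bound}), proving (i).

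For (ii), the compatibility $\psi_0\in D(H_{\alpha(0)})$ pins $\alpha(0)$ as soon as $\psi_0(0)\neq 0$, so in comparing $\alpha_1,\alpha_2$ we may take $\alpha_1(0)=\alpha_2(0)$; the data $q(0)$ and $\phi_0^\lambda$ are then common to both equations, and the difference $w=q_1-q_2$ satisfies $w(0)=0$ together with
\begin{equation*}
w=F_{12}-\alpha_2\,\tfrac{i}{\pi}Uw,\qquad F_{12}=-(\alpha_1-\alpha_2)\Bigl(e^{it\Delta_I^D}\phi_0^\lambda(0)+q(0)\,e^{it\Delta_I^D}\mathcal{G}_0^\lambda(0,0)+\tfrac{i}{\pi}Uq_1\Bigr).
\end{equation*}
Splitting $Uq_1=U[q(0)]+U[q_1-q(0)]$, the singular Green-function term recombines with $\tfrac{i}{\pi}U[q(0)]$ exactly as in the proof of Lemma \ref{Lemma 1.3} into the $H^1$-sum $\tfrac{q(0)}{\pi}\sum_{k\in D}\bigl(\tfrac{1}{\lambda_k}-\tfrac{\lambda e^{-i\lambda_k t}}{\lambda_k(\lambda_k+\lambda)}\bigr)$, while $U[q_1-q(0)]$ is $H^1$ by Lemma \ref{Lemma 1.1}. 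The part-(i) bound on $\|q_1\|_{H^1}$ then makes $\|F_{12}\|_{H^1}\lesssim\|\alpha_1-\alpha_2\|_{H^1}$ with constant uniform on balls $\{\|\alpha_j\|_{H^1}\le R\}$, and one final invocation of Lemma \ref{Lemma 1.3} with $v(0)=0$ yields the local Lipschitz estimate. The main obstacle throughout is that $\psi_0$ only lies in $H^{3/2-\varepsilon}(I)$ due to the derivative jump at $x=0$, so the naive source $-\alpha\,e^{it\Delta_I^D}\psi_0(0)$ fails to be $H^1$ in time; offloading the singular Green-function contribution to the $v(0)$-slot of Lemma \ref{Lemma 1.3} is precisely the mechanism that repairs this, and the same regularization is recycled to close the difference equation in (ii).
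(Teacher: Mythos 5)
Your proposal is correct and follows essentially the same route as the paper: part (i) is reduced to Lemma \ref{Lemma 1.3} by splitting $\psi_0=\phi_0^\lambda+q(0)\mathcal{G}_0^\lambda(\cdot,0)$, taking $f=-\alpha\,e^{it\Delta_I^D}\phi_0^\lambda(0)$ and $\varphi=\alpha$, and checking $f\in H^1(0,T)$ via the eigenfunction expansion and the algebra property, exactly as in the paper. The only divergence is in (ii): the paper keeps the regular part $\phi_0^\lambda$ fixed but lets the singular coefficient vary with $\alpha(0)$ (so the difference $u=q-\tilde q$ has $u(0)\neq0$ and the full $v(0)$-slot of Lemma \ref{Lemma 1.3} is used), whereas you fix $\psi_0$ literally, observe that the compatibility condition then pins $\alpha(0)$, and work with $w(0)=0$; your reading is closer to the statement as written, and your explicit recombination of $q(0)\,e^{it\Delta_I^D}\mathcal{G}_0^\lambda(0,0)$ with $\tfrac{i}{\pi}U[q(0)]$ correctly supplies the $H^1$ regularity of the source that the paper asserts more tersely.
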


\begin{proof}
$i)$ With the notation introduced in (\ref{U}), the charge equation is%
\begin{equation}
q=-\alpha e^{it\Delta_{I}^{D}}\psi_{0}(0)-\alpha\frac{i}{\pi}Uq\,.
\label{charge_eq}%
\end{equation}
Using the decomposition: $\psi_{0}=\phi_{0}^{\lambda}+q(0)\mathcal{G}%
_{0}^{\lambda}(\cdot,0)$, $\phi_{0}^{\lambda}\in H^{2}\cap H_{0}^{1}(I)$,
holding for $\lambda\in\mathbb{C}\backslash\mathbb{R}$ (we refer to
Proposition \ref{Proposition 0}), this equation rephrases as follows%
\begin{equation}
q=-\alpha e^{it\Delta_{I}^{D}}\phi_{0}^{\lambda}-\alpha\left(  q(0)e^{it\Delta
_{I}^{D}}\left.  \mathcal{G}_{0}^{\lambda}(\cdot,0)\right\vert _{x=0}+\frac
{i}{\pi}Uq\right)  \,.
\end{equation}
Consider $e^{it\Delta_{I}^{D}}\phi_{0}^{\lambda}(0)$. As a consequence of the
Stone's Theorem (e.g. in \cite{Simon})), the operator $e^{it\Delta_{I}^{D}}$
defines a continuous flow on $H^{2}\cap H_{0}^{1}(I)$ strongly differentiable
in $\,L^{2}(I)$. For $\left.  \phi_{0}^{\lambda}\in H^{2}\cap H_{0}%
^{1}(I)\right.  $, we use the Fourier expansion $\phi_{0}^{\lambda}%
=\sum\limits_{k\in\mathbb{N}}a_{k}\psi_{k}$, where the coefficients
$a_{k}=\left(  \phi_{0}^{\lambda},\psi_{k}\right)  _{L^{2}(I)}$ are
characterized by: $a_{k}\in\ell_{2}(\mathbb{N})$, $\lambda_{k}a_{k}\in\ell
_{2}(\mathbb{N})$ (as it follows integrating by parts and exploiting the
boundary conditions of $\psi_{k}$). The time propagator $e^{it\Delta_{I}^{D}}$
acts like: $e^{it\Delta_{I}^{D}}\phi_{0}^{\lambda}=\sum\limits_{k\in
\mathbb{N}}a_{k}e^{-i\lambda_{k}t}\psi_{k}$. In particular, we have%
\begin{equation}
e^{it\Delta_{I}^{D}}\phi_{0}^{\lambda}(0)=\sum\limits_{k\in D}a_{k}%
e^{-i\lambda_{k}t}\,. \label{termine noto}%
\end{equation}
According to: $\lambda_{k}a_{k}\in\ell_{2}$, it follows: $\partial
_{t}e^{it\Delta_{I}^{D}}\phi_{0}^{\lambda}(0)=-i\sum_{k\in D}a_{k}\lambda
_{k}e^{-i\lambda_{k}t}$, in the weak sense and: $e^{it\Delta_{I}^{D}}\phi
_{0}^{\lambda}(0)\in H^{1}(0,T)$. Then the first statement follows as an
application of Lemma \ref{Lemma 1.3} with $f=-\alpha e^{it\Delta_{I}^{D}}%
\phi_{0}^{\lambda}(0)$ and $\varphi=\alpha$.

$ii)$ Let $\alpha$, $\tilde{\alpha}\in H^{1}(0,T)$, $\psi_{0}$ and
$\tilde{\psi}_{0}$ defined with the same regular part%
\begin{equation}
\psi_{0}=\phi_{0}^{\lambda}+q(0)\mathcal{G}_{0}^{\lambda}(\cdot,0)\,,\qquad
\tilde{\psi}_{0}=\phi_{0}^{\lambda}+\tilde{q}(0)\mathcal{G}_{0}^{\lambda
}(\cdot,0)\,, \label{charge_cont_psi}%
\end{equation}
and consider the corresponding solutions to (\ref{charge_eq}) $q$ and
$\tilde{q}$. The initial values depends on $\alpha$ and $\tilde{\alpha}$
through the conditions%
\begin{equation}
q(0)=\frac{\phi_{0}^{\lambda}(0)}{1+\alpha(0)\mathcal{G}_{0}^{\lambda}%
(0,0)}\,,\qquad\tilde{q}(0)=\frac{\phi_{0}^{\lambda}(0)}{1+\tilde{\alpha
}(0)\mathcal{G}_{0}^{\lambda}(0,0)}\,, \label{charge_cont_init}%
\end{equation}
while the difference $u=q-\tilde{q}$\ solves the equation%
\begin{equation}
u=S-\alpha\left(  u(0)\,e^{it\Delta_{I}^{D}}\left.  \mathcal{G}_{0}^{\lambda
}(\cdot,0)\right\vert _{x=0}+\frac{i}{\pi}Uu\right)  \,,
\label{charge_diff_eq}%
\end{equation}%
\begin{equation}
S(t)=-\left(  \alpha-\tilde{\alpha}\right)  \left[  e^{it\Delta_{I}^{D}}%
\phi_{0}^{\lambda}(0)-\frac{\tilde{q}}{\tilde{\alpha}}\right]  \,.
\label{charge_diff_source}%
\end{equation}
From the previous point, we have: $e^{it\Delta_{I}^{D}}\phi_{0}^{\lambda
}(0),q,\tilde{q}\in H^{1}(0,T)$. The same inclusion holds for the ratios
$\frac{q}{\alpha},\frac{\tilde{q}}{\tilde{\alpha}}$, according to the
equation's structure. An application of Lemma \ref{Lemma 1.3} with: $f=S$ and
$\varphi=\alpha$, gives%
\begin{equation}
\left\Vert u\right\Vert _{H^{1}(0,T)}\lesssim\left\Vert \alpha-\tilde{\alpha
}\right\Vert _{H^{1}(0,T)} \label{charge_diff_est}%
\end{equation}

\end{proof}

\subsection{Solution of the evolution problem}

Next we consider the system (\ref{Schroedinger 3})-(\ref{carica 1}) with%
\begin{equation}
\alpha\in H^{1}(0,T),\quad\psi_{0}\in D(H_{\alpha(0)})\,.
\label{stato iniziale}%
\end{equation}
Due to the result of Lemma \ref{Lemma 1.3}, the equation (\ref{carica 1}) has
a unique solution $q\in H^{1}(0,T)$. This can be used to show that the
corresponding evolution, defined by (\ref{Schroedinger 3}), is $C\left(
\left[  0,T\right]  ,H_{0}^{1}(I)\right)  \cap C^{1}(\left[  0,T\right]
,L^{2}(I))$ and solves the Cauchy problem (\ref{Schroedinger}). Fix
$\lambda\in\mathbb{C}\backslash\sigma_{\Delta_{I}^{D}}$; from the definition
of the operator's domain (see (\ref{dominio1})), any $\psi\in D(H_{\alpha
(t)})$ can be represented as the sum of a 'regular' part plus a Green's
function%
\begin{equation}
\psi(\cdot,t)=\phi^{\lambda}(\cdot,t)+q(t)\mathcal{G}_{0}^{\lambda}
\label{state}%
\end{equation}
with $q$ fulfilling (\ref{carica 1}). In the mild sense, the evolution of the
regular part is%
\begin{equation}
\phi^{\lambda}(x,t)=e^{it\Delta_{I}^{D}}\psi_{0}(x)+\frac{i}{\sqrt{\pi}}%
\sum\limits_{k\in D}\int_{0}^{t}q(s)e^{-i\lambda_{k}\left(  t-s\right)
}\,ds\,\psi_{k}(x)-q(t)\mathcal{G}_{0}^{\lambda}(x)\,. \label{phi 1}%
\end{equation}
Next we consider the operator%
\begin{equation}
F(q,t)=\frac{i}{\sqrt{\pi}}\sum\limits_{k\in D}\left(  \int_{0}^{t}%
q(s)e^{-i\lambda_{k}\left(  t-s\right)  }\,ds\right)  \,\psi_{k}\,. \label{F}%
\end{equation}

\begin{lemma}
\label{Lemma 1.2}For $q\in H^{1}(0,T)$, the map $F(q,t)$ is bounded in
$C\left(  \left[  0,T\right]  ,\,H_{0}^{1}(I)\right)  $.
\end{lemma}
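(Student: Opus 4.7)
\textbf{Proof sketch for Lemma \ref{Lemma 1.2}.} The plan is to exploit the spectral characterization of $H_0^1(I)$ with respect to the Dirichlet basis $\{\psi_k\}$ given in (\ref{stati stazionari}). Since $-\Delta_I^D \psi_k = \lambda_k \psi_k$ and the $\psi_k$ vanish at $\pm\pi$, the norm on $H_0^1(I)$ is equivalent to $\bigl(\sum_{k\in\mathbb{N}}(1+\lambda_k)|c_k|^2\bigr)^{1/2}$ where $c_k$ are the Fourier coefficients of a function. Accordingly, for each $t\in[0,T]$ the quantity $\|F(q,t)\|_{H_0^1(I)}^2$ is, up to constants, equal to
\[
\sum_{k\in D}(1+\lambda_k)\,|c_k(t)|^2,\qquad c_k(t)=\frac{i}{\sqrt{\pi}}\int_0^t q(s)\,e^{-i\lambda_k(t-s)}\,ds,
\]
so the whole problem reduces to estimating $\sum_k (1+\lambda_k)|c_k(t)|^2$ uniformly in $t\in[0,T]$.

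The key step is to extract a $\lambda_k^{-1}$ factor out of $c_k(t)$ by the same integration by parts already used in (\ref{C2-1}). For $q\in H^1(0,T)$,
\[
\int_0^t q(s)\,e^{-i\lambda_k(t-s)}\,ds = \frac{1}{i\lambda_k}\bigl[q(t)-q(0)e^{-i\lambda_k t}\bigr] - \frac{1}{i\lambda_k}\int_0^t \dot{q}(s)\,e^{-i\lambda_k(t-s)}\,ds.
\]
A triangle inequality combined with Cauchy--Schwarz on the residual integral then yields, for all $t\in[0,T]$,
\[
|c_k(t)| \;\lesssim\; \frac{1}{\lambda_k}\Bigl(\|q\|_{C([0,T])} + \sqrt{T}\,\|\dot q\|_{L^2(0,T)}\Bigr) \;\lesssim\; \frac{1}{\lambda_k}\,(1+\sqrt{T})\,\|q\|_{H^1(0,T)},
\]
where the continuous embedding $H^1(0,T)\hookrightarrow C([0,T])$ is used to control $q(t)$ and $q(0)$.

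Since $\lambda_k = k^2/4$, one has $\sum_{k\in D}(1+\lambda_k)/\lambda_k^2 < \infty$, and squaring and summing the previous bound produces a $t$-independent estimate
\[
\sup_{t\in[0,T]}\|F(q,t)\|_{H_0^1(I)} \;\lesssim\; (1+T)\,\|q\|_{H^1(0,T)}.
\]
Continuity of $t\mapsto F(q,t)$ into $H_0^1(I)$ then follows from a Weierstrass $M$-test argument: each $c_k(\cdot)$ is continuous on $[0,T]$ (since $q$ is continuous), and the summands $c_k(t)\psi_k$, viewed in $H_0^1$, are majorized in norm by $\sqrt{1+\lambda_k}\,\sup_t|c_k(t)|$, which is a summable sequence by the same $\lambda_k^{-1}$ argument. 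Hence the partial sums converge to $F(q,\cdot)$ uniformly in $t$ as $C([0,T],H_0^1(I))$-valued functions, giving the claim. No real obstacle arises: the only delicate point is making sure the $\lambda_k^{-1}$ gain from the integration by parts is not spoiled when passing from the scalar $c_k$ to the $H_0^1$-weighted sum, which is guaranteed by the decay $\lambda_k\sim k^2$.
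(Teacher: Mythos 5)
Your argument is sound in its main estimate and reaches the stated bound, but by a genuinely more elementary route than the paper. The paper also integrates by parts to extract the $\lambda_k^{-1}$ (resp.\ $\lambda_k^{-1/2}$ for the spatial derivative), but it treats the residual integrals $\int_0^t \dot q(s)e^{-i\lambda_k(t-s)}\,ds$ collectively: choosing $8\pi N_T\geq T$ so that $\{e^{-i\lambda_k s}\}$ sits inside an orthogonal basis of $L^2(0,8\pi N_T)$, it applies Bessel's inequality to get $\sum_k\bigl|\int_0^t \dot q\,e^{-i\lambda_k(t-s)}ds\bigr|^2\leq\|\dot q\|_{L^2}^2$ with no decay in $k$ needed. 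You instead bound each such integral individually by $\sqrt{T}\,\|\dot q\|_{L^2}$ via Cauchy--Schwarz and let the full weight of summability rest on the $\lambda_k^{-1}\sim k^{-2}$ gain, which indeed suffices since $\sum_{k\in D}(1+\lambda_k)/\lambda_k^2<\infty$. What you lose is sharpness of the constant and the reusability of the Bessel mechanism (which the paper exploits again in Lemma \ref{Lemma 1.1} and in situations where only a $\lambda_k^{-1/2}$ gain is available); what you gain is that you never need the auxiliary condition (\ref{aux_condition}) or any orthogonality of the time exponentials.

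One step as written is incorrect, though easily repaired: in the continuity argument you invoke the Weierstrass $M$-test with majorant $\sqrt{1+\lambda_k}\,\sup_t|c_k(t)|\lesssim\sqrt{1+\lambda_k}/\lambda_k\sim 1/k$, and $\sum_{k\in D}1/k$ diverges, so the summands are \emph{not} dominated by a summable numerical sequence in the $H_0^1$-norm. The conclusion survives because the $\psi_k$ are orthogonal in $H_0^1(I)$ as well as in $L^2(I)$: the tail satisfies
\begin{equation*}
\sup_{t\in[0,T]}\Bigl\Vert\sum_{k>N}c_k(t)\psi_k\Bigr\Vert_{H_0^1(I)}^2\lesssim\sum_{k>N}(1+\lambda_k)\,\sup_{t\in[0,T]}|c_k(t)|^2\lesssim\sum_{k>N}\frac{1+\lambda_k}{\lambda_k^2}\;\xrightarrow[N\to\infty]{}\;0\,,
\end{equation*}
so the partial sums converge uniformly in $C([0,T],H_0^1(I))$ by square-summability of the weighted coefficients, not by absolute summability of the norms. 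State the tail estimate in this $\ell^2$ form and your proof is complete.
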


\begin{proof}
Let $q_{t}=1_{\left(  0,t\right)  }q$ and $N_{T}$ be the smallest integer such
that: $8\pi N_{T}\geq T$. By definition (see (\ref{aux_condition})), $\left\{
e^{-i\lambda_{k}s}\right\}  _{k\in\mathbb{N}}$ forms a subset of the standard
basis in $L^{2}(0,8\pi N_{T})$. The Fourier coefficients of $q_{t}$ along
these frequencies are%
\begin{equation}
\int_{0}^{8\pi N_{T}}q_{t}(s)e^{i\lambda_{k}s}\,ds=\int_{0}^{t}q(s)e^{i\lambda
_{k}s}\,ds=c_{k}(t) \label{L1.2 0}%
\end{equation}
and the inequality%
\begin{equation}
\sum_{k\in\mathbb{N}}\left\vert c_{k}(t)\right\vert ^{2}\leq\left\Vert
q_{t}\right\Vert _{L^{2}(0,8\pi N_{T})}^{2}=\int_{0}^{t}\left\vert
q(s)\right\vert ^{2}\,ds \label{L1.2 1}%
\end{equation}
holds. It follows%
\begin{equation}
\left\Vert F(q,t)\right\Vert _{L^{2}(I)}^{2}=\frac{1}{\pi}\sum_{k\in
D}\left\vert c_{k}(t)\right\vert ^{2}\leq\frac{1}{\pi}\int_{0}^{t}\left\vert
q(s)\right\vert ^{2}\,ds \label{L1.2 2}%
\end{equation}
and%
\begin{equation}
\sup_{t\in\left[  0,T\right]  }\left\Vert F(q,t)\right\Vert _{L^{2}(I)}%
^{2}\leq\frac{1}{\pi}\int_{0}^{T}\left\vert q(s)\right\vert ^{2}\,ds=\frac
{1}{\pi}\left\Vert q\right\Vert _{L^{2}(0,T)}^{2}\,. \label{L1.2 3}%
\end{equation}
For $q\in H^{1}(0,T)$, an integration by part gives%
\begin{equation}
F(q,t)=\frac{1}{\sqrt{\pi}}\sum\limits_{k\in D}\frac{1}{\lambda_{k}}\left(
q(t)-q(0)e^{-i\lambda_{k}t}-\int_{0}^{t}q^{\prime}(s)e^{-i\lambda_{k}\left(
t-s\right)  }\,ds\,\right)  \psi_{k}\,, \label{C1.2 0}%
\end{equation}
while, using the relation: $\frac{d}{dx}\psi_{k}=\lambda_{k}^{\frac{1}{2}}%
\psi_{k}$, the weak derivative $\frac{d}{dx}F(q,t)$ can be written as%
\begin{equation}
\frac{d}{dx}F(q,t)=\frac{1}{\sqrt{\pi}}\sum\limits_{k\in D}\frac{1}%
{\lambda_{k}^{\frac{1}{2}}}\left(  q(t)-q(0)e^{-i\lambda_{k}t}-\int_{0}%
^{t}q^{\prime}(s)e^{-i\lambda_{k}\left(  t-s\right)  }\,ds\,\right)  \psi
_{k}\,. \label{C1.2 1}%
\end{equation}
For the first term at the r.h.s. of (\ref{C1.2 1}), the asymptotic behaviour
of the coefficients $\lambda_{k}^{\frac{1}{2}}=\frac{k}{2}$ and the
assumption: $q\in H^{1}(0,T)$ implies: $\left.  \sum_{k\in D}\lambda
_{k}^{-\frac{1}{2}}\left(  q(t)-q(0)e^{-i\lambda_{k}t}\right)  \psi_{k}\in
C(\left(  \left[  0,T\right]  ,L^{2}(I)\right)  )\right.  $. For the remaining
term, we proceed as before: let $q_{t}^{\prime}=1_{\left(  0,t\right)
}q^{\prime}$ and denote with $c_{k}^{\prime}$ the Fourier coefficients of
$q_{t}^{\prime}$ along the freqencies $\left\{  e^{-i\lambda_{k}s}\right\}
_{k\in\mathbb{N}}$%
\[
c_{k}^{\prime}=\int_{0}^{t}q^{\prime}(s)e^{-i\lambda_{k}\left(  t-s\right)
}\,ds\,.
\]
It follows%
\begin{equation}
\left\Vert \sum\limits_{k\in D}\frac{1}{\lambda_{k}^{\frac{1}{2}}}\int_{0}%
^{t}q^{\prime}(s)e^{-i\lambda_{k}\left(  t-s\right)  }\,ds\,\psi
_{k}\right\Vert _{L^{2}(I)}^{2}=\sum_{k\in D}\frac{\left\vert c_{k}^{\prime
}\right\vert ^{2}}{\lambda_{k}}\leq\int_{0}^{t}\left\vert q^{\prime
}(s)\right\vert ^{2}\,ds \label{C1.2 2}%
\end{equation}
and%
\begin{equation}
\sup_{t\in\left[  0,T\right]  }\left\Vert \sum\limits_{k\in D}\frac{1}%
{\lambda_{k}^{\frac{1}{2}}}\int_{0}^{t}q^{\prime}(s)e^{-i\lambda_{k}\left(
t-s\right)  }\,ds\,\psi_{k}\right\Vert _{L^{2}(I)}^{2}\leq\left\Vert
q^{\prime}\right\Vert _{L^{2}(0,T)}^{2}\,. \label{C1.2 3}%
\end{equation}
The above estimates lead us to%
\begin{equation}
\sup_{t\in\left[  0,T\right]  }\left\Vert F(q,t)\right\Vert _{H_{0}^{1}%
(I)}\lesssim\left\Vert q\right\Vert _{H^{1}(0,T)}\,. \label{bound1.0}%
\end{equation}

\end{proof}

\begin{proposition}
\label{Proposition 3}Let $\alpha\in H^{1}(0,T)$ and $\psi_{0}\in
D(H_{\alpha(0)})$. The system (\ref{Schroedinger 3})-(\ref{carica 1}), has an
unique solution $\psi_{t}\in C\left(  \left[  0,T\right]  ,\,H_{0}%
^{1}(I)\right)  \cap C^{1}(\left[  0,T\right]  ,\,L^{2}(I))$ such that:
$\psi_{t}\in D(H_{\alpha(t)})$ and $i\partial_{t}\psi_{t}=H_{\alpha(t)}%
\psi_{t}$ at each $t$.
\end{proposition}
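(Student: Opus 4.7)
The plan is to build the dynamics on top of the already established existence and uniqueness of the charge. By Corollary \ref{Coroll_1.3_1}(i), the scalar equation (\ref{carica 1}) has a unique solution $q\in H^{1}(0,T)$; inserting it into (\ref{Schroedinger 3}) defines a candidate $\psi(\cdot,t)$, and I then check in order: (a) $\psi\in C([0,T],H_{0}^{1}(I))$ with $\psi_t\in D(H_{\alpha(t)})$ pointwise, via the decomposition (\ref{state})-(\ref{phi 1}); (b) $\psi\in C^{1}([0,T],L^{2}(I))$ with $i\partial_t\psi=H_{\alpha(t)}\psi$; and (c) uniqueness.

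For (a), I split $\psi_0=\phi_0^\lambda+q(0)\mathcal{G}_0^\lambda(\cdot,0)$ and rewrite the regular part as
\begin{equation*}
\phi^\lambda(\cdot,t)=e^{it\Delta_{I}^{D}}\phi_{0}^{\lambda}+\bigl(q(0)\,e^{it\Delta_{I}^{D}}\mathcal{G}_{0}^{\lambda}(\cdot,0)+F(q,t)-q(t)\mathcal{G}_{0}^{\lambda}(\cdot,0)\bigr).
\end{equation*}
Stone's theorem applied to $-\Delta_{I}^{D}$ puts the first summand in $C([0,T],H^{2}\cap H_{0}^{1}(I))$. For the parenthesized bracket I use the eigenfunction expansion $\mathcal{G}_{0}^{\lambda}(x,0)=\tfrac{1}{\pi}\sum_{k\in D}\tfrac{\psi_{k}(x)}{\lambda_{k}+\lambda}$ coming from (\ref{Green 2.0}) and the definition (\ref{F}) of $F(q,t)$: the bracket is then a series $\sum_{k\in D}c_{k}(t)\psi_{k}$ whose coefficients can be computed via an integration by parts in the convolution defining $F$, and (\ref{carica 1}) together with $q\in H^{1}(0,T)$ is precisely what forces $\lambda_{k}c_{k}(t)\in\ell_{2}(D)$ uniformly in $t$. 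Combined with Lemma \ref{Lemma 1.2}, this gives $\phi^\lambda\in C([0,T],H^{2}\cap H_{0}^{1}(I))$; the domain condition $-q(t)=\alpha(t)\psi(0,t)$ is the charge equation itself, so Proposition \ref{Proposition 0} yields $\psi_t\in D(H_{\alpha(t)})$ for every $t$.

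Part (b) is obtained by differentiating (\ref{Schroedinger 3}) termwise in $L^{2}(I)$: for the semigroup term one regroups the $\mathcal{G}_{0}^{\lambda}$-contributions with $F(q,t)-q(t)\mathcal{G}_{0}^{\lambda}$ before applying Stone's theorem, so that one only ever differentiates $H^{2}$-functions; for $F(q,t)$ the integration-by-parts formula (\ref{C1.2 0}) together with $q\in H^{1}(0,T)$ provides the $L^{2}$-derivative. The resulting identity, rewritten via (\ref{operatore}) as $-\Delta_{I}^{D}\phi^\lambda-\lambda q\,\mathcal{G}_{0}^{\lambda}=H_{\alpha(t)}\psi_t$, is the claimed Schrödinger equation. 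Uniqueness (c) is automatic: any other admissible $\tilde\psi$ with the same initial datum carries a charge $\tilde q=-\alpha\,\tilde\psi(0,\cdot)$ that still solves (\ref{carica 1}), hence $\tilde q=q$ by Corollary \ref{Coroll_1.3_1}(i) and therefore $\tilde\psi=\psi$ by (\ref{Schroedinger 3}).

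The genuine obstacle is (a): verifying that the singular Green's-function pieces coming from $\psi_0$, from $F(q,t)$, and from the term $-q(t)\mathcal{G}_{0}^{\lambda}$ combine into an $H^{2}\cap H_{0}^{1}(I)$ function continuously in $t$. The cancellation of the derivative jump of $\mathcal{G}_{0}^{\lambda}$ at $x=0$ is entirely engineered by the charge equation (\ref{carica 1}) and the Fourier identity for $\mathcal{G}_{0}^{\lambda}$; once this is in place, (b) and (c) are essentially bookkeeping.
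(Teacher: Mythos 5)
Your proposal is correct and follows essentially the same route as the paper: the same splitting $\psi_{0}=\phi_{0}^{\lambda}+q(0)\mathcal{G}_{0}^{\lambda}(\cdot,0)$, the same regrouping of the Green's-function contributions with $F(q,t)-q(t)\mathcal{G}_{0}^{\lambda}(\cdot,0)$, integration by parts in the time convolution together with Lemma \ref{Lemma 1.2}, verification of the equation through (\ref{operatore}), and uniqueness reduced to uniqueness of the charge; the paper merely organizes the bookkeeping through the shifted variable $w=q-q(0)$ and the explicit series $Z(t)$, $Q(t)$. The only slight imprecision is attributing the cancellation of the singular parts to the charge equation (\ref{carica 1}): that cancellation is purely algebraic for any $q\in H^{1}(0,T)$, while the role of (\ref{carica 1}) is to enforce the boundary condition $-q(t)=\alpha(t)\psi(0,t)$ characterizing $D(H_{\alpha(t)})$ and to close the system.
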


\begin{proof}
The proof articulates in two steps. We first consider the conditions $\psi
_{t}\in C\left(  \left[  0,T\right]  ,\,H_{0}^{1}(I)\right)  $ and $\psi
_{t}\in C^{1}(\left[  0,T\right]  ,\,L^{2}(I))$. Then we discuss the
equivalence of the system (\ref{Schroedinger 3})-(\ref{carica 1}) with the
initial problem.
\end{proof}

\begin{proof}
$1)$ Using the notation introduced in (\ref{F}), the solution $\psi_{t}$ of
(\ref{Schroedinger 3})-(\ref{carica 1}) writes as%
\begin{equation}
\psi_{t}=e^{it\Delta_{I}^{D}}\psi_{0}+F(q,t)
\end{equation}
with $q\in H^{1}(0,T)$ solving the charge equation (\ref{carica 1}), and
$\psi_{0}\in D(H_{\alpha(0)})$. Due to the domain's structure, $\psi_{0}\in
H_{0}^{1}(I)$: in this case, the term $e^{it\Delta_{I}^{D}}\psi_{0}$ defines a
$C(\left[  0,T\right]  ,\,H_{0}^{1}(I))$ map (as a consequence of the Stone's
theorem, e.g. in \cite{Simon}). Moreover, following the result of Lemma
\ref{Lemma 1.2}, one has: $F(q,t)\in C\left(  \left[  0,T\right]  ,\,H_{0}%
^{1}(I)\right)  $. In order to study the $C^{1}(\left[  0,T\right]
,\,L^{2}(I))$ regularity of $\psi_{t}$, one can use the decomposition stated
in Proposition \ref{Proposition 0}: $\psi_{0}=\phi_{0}^{\lambda}%
+q(0)\mathcal{G}_{0}^{\lambda}(\cdot,0)$, with $\phi_{0}^{\lambda}\in
H^{2}\cap H_{0}^{1}(I)$ and $\lambda\in\mathbb{C}\backslash\sigma_{\Delta
_{I}^{D}}$. For $w(t)=q(t)-q(0)$, $\psi_{t}$ writes as%
\begin{equation}
\psi_{t}=e^{it\Delta_{I}^{D}}\phi_{0}^{\lambda}+F(w,t)+Z(t)\,, \label{ref1}%
\end{equation}%
\[
Z(t)=q(0)\left[  e^{it\Delta_{I}^{D}}\mathcal{G}_{0}^{\lambda}(\cdot
,0)+F(1,t)\right]  \,.
\]
Due to the regularity of $\phi_{0}^{\lambda}$, one has: $e^{it\Delta_{I}^{D}%
}\phi_{0}^{\lambda}\in C^{1}(\left[  0,T\right]  ,\,L^{2}(I))$. Next consider
$F(w,t)$: an integration by part gives%
\[
F(w,t)=\frac{1}{\sqrt{\pi}}\sum\limits_{k\in D}\frac{1}{\lambda_{k}}\left(
w(t)-\int_{0}^{t}w^{\prime}(s)e^{-i\lambda_{k}\left(  t-s\right)
}\,ds\,\right)  \psi_{k}%
\]
and%
\begin{equation}
\frac{d}{dt}F(w,t)=\frac{i}{\sqrt{\pi}}\sum_{k\in D}\int_{0}^{t}w^{\prime
}(s)e^{-i\lambda_{k}\left(  t-s\right)  }ds\,\psi_{k}=F(w^{\prime},t)\,.
\label{ref2}%
\end{equation}
Using the estimate (\ref{L1.2 3}) in Lemma \ref{Lemma 1.2}, this relation
gives: $F(w,t)\in C^{1}(\left[  0,T\right]  ,\,L^{2}(I))$. Concerning the term
$Z(t)$, the formula%
\[
e^{it\Delta_{I}^{D}}\mathcal{G}_{0}^{\lambda}(\cdot,0)=\frac{1}{\sqrt{\pi}%
}\sum_{k\in D}\frac{e^{-i\lambda_{k}t}}{\lambda_{k}+\lambda}\psi_{k}%
\]
and an explicit computation lead to%
\begin{equation}
Z(t)=\frac{q(0)}{\sqrt{\pi}}\sum_{k\in D}\frac{1}{\lambda_{k}}\psi_{k}%
-\frac{q(0)}{\sqrt{\pi}}\sum_{k\in D}\frac{\lambda e^{-i\lambda_{k}t}}%
{\lambda_{k}\left(  \lambda_{k}+\lambda\right)  }\psi_{k}\,. \label{Zeta}%
\end{equation}
It follows: $Z(t)\in C^{1}(\left[  0,T\right]  ,\,H_{0}^{1}(I))$ and:
$\psi_{t}\in C\left(  \left[  0,T\right]  ,\,H_{0}^{1}(I)\right)  \cap
C^{1}(\left[  0,T\right]  ,\,L^{2}(I))$.

$2)$ The regular part of $\psi_{t}$ writes as%
\begin{equation}
\phi_{t}^{\lambda}=e^{it\Delta_{I}^{D}}\psi_{0}+F(q,t)-q(t)\mathcal{G}%
_{0}^{\lambda}(\cdot,0) \label{phi 0}%
\end{equation}
Using once more the decomposition: $\psi_{0}=\phi_{0}^{\lambda}%
+q(0)\mathcal{G}_{0}^{\lambda}(\cdot,0)$, $\phi_{0}^{\lambda}\in H^{2}\cap
H_{0}^{1}(I)$, $\lambda\in\mathbb{C}\backslash\sigma_{\Delta_{I}^{D}}$ and
$w(t)=q(t)-q(0)$, we get%
\begin{equation}
\phi_{t}^{\lambda}=e^{it\Delta_{I}^{D}}\phi_{0}^{\lambda}%
+F(w,t)-w(t)\mathcal{G}_{0}^{\lambda}(\cdot,0)+q(0)Q(t) \label{phi 0.1}%
\end{equation}%
\begin{equation}
Q(t)=\left[  \left(  e^{it\Delta_{I}^{D}}-1\right)  \mathcal{G}_{0}^{\lambda
}(\cdot,0)+F(1,t)\right]  \label{Q(t)}%
\end{equation}
The regularity of $\phi_{0}^{\lambda}$, yields: $e^{it\Delta_{I}^{D}}\phi
_{0}^{\lambda}\in C(\left[  0,T\right]  ,\,H^{2}\cap H_{0}^{1}(I))$, while,
according to the explicit form of $e^{it\Delta_{I}^{D}}\mathcal{G}%
_{0}^{\lambda}(\cdot,0)$ and $F(1,t)$, $Q(t)$ is%
\begin{equation}
Q(t)=\frac{1}{\sqrt{\pi}}\sum_{k\in D}\left(  1-e^{-i\lambda_{k}t}\right)
\frac{\lambda}{\lambda_{k}\left(  \lambda_{k}-\lambda\right)  }\psi_{k}
\label{Qu}%
\end{equation}
This yields $Q(t)\in C(\left[  0,T\right]  ,\,H^{2}\cap H_{0}^{1}(I))$. For
the remaining term, $F(w,t)-w(t)\mathcal{G}_{0}^{\lambda}(\cdot,0)$, it has
already been noticed that $F(w,t)$ is continuously embedded into $H_{0}%
^{1}(I)$; the same holds for $w(t)\mathcal{G}_{0}^{\lambda}(\cdot,0)$, since
$\mathcal{G}_{0}^{\lambda}\left(  \cdot,0\right)  \in H_{0}^{1}(I)$ and $w$ is
continuous. Moreover, a direct computation of the Fourier coefficients of
$\frac{d^{2}}{dx^{2}}\left[  F(q,t)-q(t)\mathcal{G}_{0}^{\lambda}\left(
\cdot,0\right)  \right]  $ w.r.t. the system $\left\{  \psi_{k}\right\}
_{k\in\mathbb{N}}$ gives%
\begin{equation}
b_{k}=\left\{
\begin{array}
[c]{c}%
\frac{1}{\sqrt{\pi}}\int_{0}^{t}w^{\prime}(s)e^{-i\lambda_{k}\left(
t-s\right)  }\,ds\,-\lambda w(t)\left(  \mathcal{G}_{0}^{\lambda}\left(
\cdot,0\right)  ,\,\psi_{k}\right)  _{L^{2}(I)}\qquad k\ odd\\
0\qquad\qquad\qquad\qquad\qquad\qquad\qquad\qquad\qquad\qquad otherwise
\end{array}
\right.  \label{Fourier coefficients 4}%
\end{equation}
from which it follows%
\begin{equation}
-\frac{d^{2}}{dx^{2}}\left[  F(w,t)-w(t)\mathcal{G}_{0}^{\lambda}\left(
\cdot,0\right)  \right]  =iF(w^{\prime},t)+\lambda w(t)\mathcal{G}%
_{0}^{\lambda}\left(  \cdot,0\right)  \,. \label{phi 4}%
\end{equation}
Then, the estimate (\ref{L1.2 3}) in Lemma \ref{Lemma 1.2} yield: $\frac
{d^{2}}{dx^{2}}\left[  F(q,t)-q(t)\mathcal{G}_{0}^{\lambda}\left(
\cdot,0\right)  \right]  \in C\left(  \left[  0,T\right]  ,\,L_{2}(I)\right)
$ and $\left[  F(w,t)-w(t)\mathcal{G}_{0}^{\lambda}\left(  \cdot,0\right)
\right]  \in C\left(  \left[  0,T\right]  ,\,H^{2}\cap H_{0}^{1}(I)\right)  $,
which implies%
\[
\phi_{t}^{\lambda}\in C\left(  \left[  0,T\right]  ,\,H^{2}\cap H_{0}%
^{1}(I)\right)  \,;
\]
this, together with the boundary condition $-q(t)=\alpha(t)\psi_{t}(0)$
(arising from the equation (\ref{carica 1})), assures that $\psi_{t}\in
D(H_{\alpha(t)})$ at each $t$.

Next we discuss the last point of the Proposition. The continuity in time of
the map $\psi_{t}$ and (\ref{Schroedinger 3}) allow to write: $\psi
(\cdot,t=0)=\psi_{0}$. According to (\ref{ref1})-(\ref{Zeta}), the derivative
$i\partial_{t}\psi_{t}$ is%
\begin{equation}
i\frac{d}{dt}\psi_{t}=-\frac{d^{2}}{dx^{2}}e^{it\Delta_{I}^{D}}\phi
_{0}^{\lambda}+iF(w^{\prime},t)+iZ^{\prime}(t) \label{P2 1}%
\end{equation}%
\[
iZ^{\prime}(t)=-\frac{q(0)}{\sqrt{\pi}}\sum_{k\in D}\frac{\lambda
e^{-i\lambda_{k}t}}{\lambda_{k}+\lambda}\psi_{k}%
\]
On the other hand, from definition (\ref{operatore}) one has%
\begin{equation}
H_{\alpha(t)}\psi_{t}=-\frac{d^{2}}{dx^{2}}\phi_{t}^{\lambda}-\lambda
q(t)\mathcal{G}_{0}^{\lambda}\left(  \cdot,0\right)  \,. \label{P2 2}%
\end{equation}
Making use of (\ref{phi 0.1})-(\ref{Qu}) and (\ref{phi 4}), this writes as%
\[
H_{\alpha(t)}\psi_{t}=-\frac{d^{2}}{dx^{2}}e^{it\Delta_{I}^{D}}\phi
_{0}^{\lambda}+iF(w^{\prime},t)+\lambda q(0)\mathcal{G}_{0}^{\lambda}\left(
\cdot,0\right)  -q(0)\frac{d^{2}}{dx^{2}}Q(t)
\]
with%
\[
-q(0)\frac{d^{2}}{dx^{2}}Q(t)=iF(w^{\prime},t)+\lambda q(0)\mathcal{G}%
_{0}^{\lambda}\left(  \cdot,0\right)  \,.
\]
Combining the above relations, it follows%
\[
i\frac{d}{dt}\psi_{t}-H_{\alpha(t)}\psi(x,t)=0\quad\text{in }L^{2}(I).
\]

\end{proof}

\section{\label{Sec_control}Stability and local controllability}

The evolution problem (\ref{Schroedinger}) defines a map $\Gamma(\alpha
,\psi_{0})$ associating to the coupling parameter, $\alpha$, and the initial
state, $\psi_{0}$, the solution at time $T$. According to the notation
introduced in (\ref{U}) and (\ref{F}), it writes as%
\begin{equation}
\Gamma(\alpha)=e^{iT\Delta}\psi_{0}+F\left(  V(\alpha),T\right)  \,,
\label{G-def1}%
\end{equation}%
\begin{equation}
V(\alpha)=q:q(t)=-\alpha(t)\left(  e^{it\Delta_{I}^{D}}\psi_{0}(0)+\frac
{i}{\pi}Uq(t)\,\right)  \,. \label{G-def2}%
\end{equation}
The local controllability in time $T$ of the dynamics described by
(\ref{Schroedinger}), when $\alpha$ is considered as a control parameter, is
connected with the following question: for $\psi_{0}\in D(H_{\alpha(0)})$,
does $\alpha$ exists such that $\Gamma(\alpha,\psi_{0})=\psi_{T}$ for
arbitrary $\psi_{T}$ in a neighbourhood of $\psi_{0}$? In what follows, we fix
the initial state $\psi_{0}$ and discuss the regularity of the map
$\alpha\rightarrow\Gamma(\alpha,\psi_{0})$. Hence, this control problem will
be considered in the local setting where $\psi_{0}$ concides with a steady
state of the Dirichlet Laplacian and $\alpha$ is small in $H_{0}^{1}(0,T)$. To
simplify the notation, we use $\Gamma(\alpha)$ instead of $\Gamma(\alpha
,\psi_{0})$.

\subsection{Regularity of $\Gamma$}

Recall a standard result in the theory of differential calculus in Banach
spaces. Let $X$ and $Y$ denote two Banach spaces, $W$ an open subset of $X$,
$d_{\alpha}F$ and $d_{\alpha}^{G}F$ respectively the Fr\'{e}chet and the
G\^{a}teaux derivatives of $F:W\rightarrow Y$ evaluated in the point $\alpha$.
A functional $F:W\rightarrow Y$ is of class $C^{1}$ if the map%
\begin{equation}
F^{\prime}:W\rightarrow\mathcal{L}(X,Y),\ F^{\prime}(\alpha)=d_{\alpha}F
\label{C1_def}%
\end{equation}
is continuous.

\begin{theorem}
[(Theorem 1.9 in \cite{Ambrosetti})]\label{prodi}Suppose $F:W\rightarrow Y$ is
G\^{a}teaux-differentiable in $U$. If the map:%
\[
F_{G}^{\prime}:W\rightarrow\mathcal{L}(X,Y),\ F_{G}^{\prime}(\alpha
)=d_{\alpha}^{G}F
\]
is continuous at $\alpha^{\ast}\in W$, then $F$ is Fr\'{e}chet-differentiable
at $\alpha^{\ast}$ and its Fr\'{e}chet derivative evaluated in $\alpha^{\ast}$
results%
\[
d_{\alpha^{\ast}}F=d_{\alpha^{\ast}}^{G}F
\]

\end{theorem}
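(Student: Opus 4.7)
The plan is to reduce the statement to the one-dimensional mean value theorem by passing through the dual space $Y^*$, then use the continuity hypothesis on $F_G'$ to control the remainder. Throughout I will work with a fixed $\alpha^{\ast}\in W$ and an increment $h\in X$ chosen so small that the whole segment $\{\alpha^{\ast}+th:t\in[0,1]\}$ is contained in $W$; this is possible since $W$ is open.

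First, for every $y^{\ast}\in Y^{\ast}$ with $\|y^{\ast}\|\leq 1$, I would introduce the scalar function
\[
\phi(t)=y^{\ast}\!\left(F(\alpha^{\ast}+th)-t\,d^{G}_{\alpha^{\ast}}F\cdot h\right),\qquad t\in[0,1].
\]
Gâteaux differentiability of $F$ along the segment implies that $\phi$ is differentiable on $[0,1]$ with
$\phi'(t)=y^{\ast}\!\left(d^{G}_{\alpha^{\ast}+th}F\cdot h-d^{G}_{\alpha^{\ast}}F\cdot h\right)$. The classical mean value theorem on the real line then yields $|\phi(1)-\phi(0)|\leq\sup_{t\in(0,1)}|\phi'(t)|$, which rewrites as
\[
\bigl|y^{\ast}\bigl(F(\alpha^{\ast}+h)-F(\alpha^{\ast})-d^{G}_{\alpha^{\ast}}F\cdot h\bigr)\bigr|\leq\sup_{t\in[0,1]}\bigl\|d^{G}_{\alpha^{\ast}+th}F-d^{G}_{\alpha^{\ast}}F\bigr\|_{\mathcal{L}(X,Y)}\|h\|_{X}.
\]
Taking the supremum over $\|y^{\ast}\|\leq 1$ and invoking Hahn--Banach to recover the $Y$-norm on the left, I obtain the Fréchet-type remainder estimate
\[
\bigl\|F(\alpha^{\ast}+h)-F(\alpha^{\ast})-d^{G}_{\alpha^{\ast}}F\cdot h\bigr\|_{Y}\leq\|h\|_{X}\sup_{t\in[0,1]}\bigl\|d^{G}_{\alpha^{\ast}+th}F-d^{G}_{\alpha^{\ast}}F\bigr\|_{\mathcal{L}(X,Y)}.
\]

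To conclude, I invoke the continuity of $F_{G}'$ at $\alpha^{\ast}$: for any $\varepsilon>0$ there is $\delta>0$ so that $\|k\|_{X}<\delta$ implies $\|d^{G}_{\alpha^{\ast}+k}F-d^{G}_{\alpha^{\ast}}F\|_{\mathcal{L}(X,Y)}<\varepsilon$. Applying this with $k=th$ for $t\in[0,1]$ and $\|h\|_{X}<\delta$, the supremum above is bounded by $\varepsilon$, whence the quotient $\|F(\alpha^{\ast}+h)-F(\alpha^{\ast})-d^{G}_{\alpha^{\ast}}F\cdot h\|_{Y}/\|h\|_{X}$ tends to $0$ as $h\to 0$. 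This gives Fréchet differentiability at $\alpha^{\ast}$ with derivative $d_{\alpha^{\ast}}F=d^{G}_{\alpha^{\ast}}F$, as required.

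There is not really a serious obstacle here: the argument is the standard folklore proof that continuous Gâteaux derivative implies Fréchet derivative. The only genuinely delicate points are (i) making sure that $\phi$ is differentiable on all of $[0,1]$ and not merely on the open interval, which requires applying the Gâteaux hypothesis at the endpoints $\alpha^{\ast}$ and $\alpha^{\ast}+h$ as well; and (ii) verifying that the $Y^{\ast}$-duality, Hahn--Banach, and the passage from the scalar MVT bound to the operator-norm bound are legitimate, which they are because each $d^{G}_{\alpha^{\ast}+th}F-d^{G}_{\alpha^{\ast}}F$ is a bounded linear operator by hypothesis.
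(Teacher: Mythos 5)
The paper offers no proof of this statement: it is quoted verbatim as Theorem 1.9 of the cited reference (Ambrosetti--Prodi), so there is nothing in the paper to compare against beyond the citation. Your argument is the standard one used in that reference --- reduce to the scalar mean value inequality through a functional $y^{\ast}\in Y^{\ast}$, recover the $Y$-norm by Hahn--Banach, and absorb the remainder using the continuity of $\alpha\mapsto d_{\alpha}^{G}F$ at $\alpha^{\ast}$ --- and it is correct as written, including your care about differentiability of $\phi$ at the endpoints of $[0,1]$.
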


\begin{lemma}
\label{Lemma 4}The map $V$ defined by (\ref{G-def2}) is $C^{1}(H^{1}%
(0,T);\,H^{1}(0,T))$.
\end{lemma}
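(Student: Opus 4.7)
The plan is to apply the Ambrosetti--Prodi criterion (Theorem \ref{prodi}): it suffices to show that $V$ is G\^{a}teaux-differentiable at every $\alpha\in H^{1}(0,T)$, with derivative a bounded linear operator $H^{1}(0,T)\to H^{1}(0,T)$, and that the map $\alpha\mapsto d_{\alpha}^{G}V$ is continuous in the operator norm. Formally differentiating (\ref{G-def2}) in the direction $h\in H^{1}(0,T)$, the candidate directional derivative $\dot{q}=d_{\alpha}^{G}V\cdot h$ should satisfy the linearised equation
\begin{equation*}
\dot{q}=-h\,B(\alpha)-\alpha\,\frac{i}{\pi}U\dot{q},\qquad B(\alpha):=e^{it\Delta_{I}^{D}}\psi_{0}(0)+\frac{i}{\pi}Uq,\quad q=V(\alpha).
\end{equation*}

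The key technical point (and what I expect to be the main obstacle) is showing that the source $-hB(\alpha)$ lies in $H^{1}(0,T)$: the individual term $e^{it\Delta_{I}^{D}}\psi_{0}(0)$ is not generically $H^{1}$ in $t$ when $\psi_{0}\in D(H_{\alpha(0)})\setminus H^{2}$, and one has to exploit the same cancellation that makes Lemma \ref{Lemma 1.3} work. Concretely, I would split $\psi_{0}=\phi_{0}^{\lambda}+q(0)\mathcal{G}_{0}^{\lambda}(\cdot,0)$ according to Proposition \ref{Proposition 0}, then write $Uq=U[q-q(0)]+q(0)\,U[1]$ and combine the non-regular pieces. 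Using the identity
\begin{equation*}
e^{it\Delta_{I}^{D}}\mathcal{G}_{0}^{\lambda}(\cdot,0)\big|_{x=0}+\frac{i}{\pi}U[1](t)=\mathcal{G}_{0}^{\lambda}(0,0)-\frac{1}{\pi}\sum_{k\in D}\frac{\lambda\,e^{-i\lambda_{k}t}}{\lambda_{k}(\lambda_{k}+\lambda)},
\end{equation*}
whose right-hand side is manifestly $H^{1}$ (decay $1/k^{4}$), together with $e^{it\Delta_{I}^{D}}\phi_{0}^{\lambda}(0)\in H^{1}(0,T)$ (from the proof of Corollary \ref{Coroll_1.3_1}) and $U[q-q(0)]\in H^{1}(0,T)$ (Lemma \ref{Lemma 1.1} applied in $\mathcal{H}_{T}$), one gets $B(\alpha)\in H^{1}(0,T)$. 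Since $H^{1}(0,T)$ is an algebra, $hB(\alpha)\in H^{1}(0,T)$ with $\|hB(\alpha)\|_{H^{1}}\lesssim\|h\|_{H^{1}}\|B(\alpha)\|_{H^{1}}$, and $\|B(\alpha)\|_{H^{1}}$ is controlled by $\|\alpha\|_{H^{1}}$ through Corollary \ref{Coroll_1.3_1}.

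Existence, uniqueness and an $H^{1}$-estimate for $\dot{q}$ follow by the contraction--iteration scheme already used in Lemma \ref{Lemma 1.3} (the linearised equation has the same $\alpha\,U$ structure, with a zero Green's-function coefficient, so the argument is simpler): partition $[0,T]$ so that $C(1+\delta)\|\alpha\|_{H^{1}(t_{j-1},t_{j})}<\pi c_{\delta}<\pi$, apply Lemma \ref{Lemma 1.1} on each piece, and iterate. This yields $\|\dot{q}\|_{H^{1}(0,T)}\lesssim\|h\|_{H^{1}(0,T)}\,\mathcal{Q}(\|\alpha\|_{H^{1}},\psi_{0})$, so $d_{\alpha}^{G}V\in\mathcal{L}(H^{1}(0,T),H^{1}(0,T))$ with locally bounded norm. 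That $\dot{q}$ genuinely equals the G\^{a}teaux derivative is then checked by subtracting the charge equations for $V(\alpha+\epsilon h)$ and $V(\alpha)$, dividing by $\epsilon$, and passing to the limit using the same Lemma \ref{Lemma 1.3} estimate together with the Lipschitz statement of Corollary \ref{Coroll_1.3_1}(ii) which makes the nonlinear residual vanish in $H^{1}$.

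Finally, for the continuity $\alpha_{n}\to\alpha$ in $H^{1}(0,T)\Rightarrow d_{\alpha_{n}}^{G}V\to d_{\alpha}^{G}V$ in operator norm, I would subtract the linearised equations for $\dot{q}_{n}$ and $\dot{q}$: the difference $\dot{q}-\dot{q}_{n}$ solves an equation of the same form with source controlled by $\|h\|_{H^{1}}(\|B(\alpha)-B(\alpha_{n})\|_{H^{1}}+\|\alpha-\alpha_{n}\|_{H^{1}}\|U\dot{q}_{n}\|_{H^{1}})$. Corollary \ref{Coroll_1.3_1}(ii) gives $V(\alpha_{n})\to V(\alpha)$ in $H^{1}$, whence the representation of $B$ from the previous step shows $B(\alpha_{n})\to B(\alpha)$ in $H^{1}$. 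Applying the iteration estimate one more time gives $\sup_{\|h\|_{H^{1}}\le 1}\|\dot{q}-\dot{q}_{n}\|_{H^{1}}\to 0$, i.e., norm continuity of $\alpha\mapsto d_{\alpha}^{G}V$. Theorem \ref{prodi} then upgrades this to $V\in C^{1}(H^{1}(0,T);H^{1}(0,T))$, completing the proof.
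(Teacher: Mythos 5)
Your proposal is correct and follows essentially the same route as the paper: apply Theorem \ref{prodi}, identify the G\^{a}teaux derivative as the solution of a linearized charge-type equation with source $-hB(\alpha)$, and invoke the contraction--iteration machinery of Lemma \ref{Lemma 1.3} together with the Lipschitz argument of Corollary \ref{Coroll_1.3_1} to get the $H^{1}$-bound and the operator-norm continuity; you merely make explicit the cancellation that the paper leaves implicit. (One cosmetic slip: the constant term in your identity should be $\frac{1}{\pi}\sum_{k\in D}\lambda_{k}^{-1}$ rather than $\mathcal{G}_{0}^{\lambda}(0,0)=\frac{1}{\pi}\sum_{k\in D}\left(\lambda_{k}+\lambda\right)^{-1}$, which changes nothing in the argument.)
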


\begin{proof}
The continuity of $V$ has been discussed in Lemma \ref{Lemma 1.3}. Next we
consider the differential map $V^{\prime}:H^{1}(0,T)\rightarrow\mathcal{L}%
(H^{1}(0,T),\,H^{1}(0,T))$. Due to Theorem \ref{prodi}, it is enough to prove
that $\alpha\rightarrow d_{\alpha}^{G}V$ is continuous w.r.t. the operator
norm in $\mathcal{L}(H^{1}(0,T),\,H^{1}(0,T))$. The action of $d_{\alpha}%
^{G}V$ on $u\in H^{1}(0,T)$ is%
\begin{equation}
d_{\alpha}^{G}V(u)=q\,, \label{dV_def1}%
\end{equation}%
\begin{equation}
q=-u\,\frac{V(\alpha)}{\alpha}-\alpha\left(  u(0)e^{it\Delta_{I}^{D}}\left.
\mathcal{G}_{0}^{\lambda}(\cdot,0)\right\vert _{x=0}+\frac{i}{\pi}Uq\right)
\,. \label{dV_def2}%
\end{equation}
Then, the $H^{1}$-bound%
\begin{equation}
\left\Vert q\right\Vert _{H^{1}(0,T)}\lesssim\left\Vert u\right\Vert
_{H^{1}(0,T)} \label{bound1.1}%
\end{equation}
and the continuous dependence from $\alpha$ follows from Lemma \ref{Lemma 1.3}
and proceeding as in Corollary \ref{Coroll_1.3_1}.
\end{proof}

\begin{proposition}
\label{Proposition 4}For $\psi_{0}\in D(H_{\alpha(0)})$, the map
$\alpha\rightarrow\Gamma(\alpha)$ is of class $C^{1}(H^{1}(0,T),\,H_{0}%
^{1}(I))$. Moreover, the regular part of the solution at time $T$,
$\Gamma(\alpha)-V(\alpha)(T)\mathcal{G}_{0}^{\lambda}$, considered as a
function of $\alpha$, is of class $C^{1}(H^{1}(0,T),\,H^{2}\cap H_{0}^{1}(I))$.
\end{proposition}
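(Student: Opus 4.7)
The plan is to obtain the proposition as an application of the chain rule combined with Lemma \ref{Lemma 4}, Lemma \ref{Lemma 1.2} and the regularity analysis already performed in the proof of Proposition \ref{Proposition 3}. First I would rewrite
\[
\Gamma(\alpha)=e^{iT\Delta_{I}^{D}}\psi_{0}+F(V(\alpha),T)\,,
\]
and observe that the first summand is a fixed element of $H_{0}^{1}(I)$ independent of $\alpha$. The second summand is a composition: Lemma \ref{Lemma 4} says $\alpha\mapsto V(\alpha)$ is $C^{1}(H^{1}(0,T);H^{1}(0,T))$; and the map $q\mapsto F(q,T)$ is \emph{linear} in $q$, with the bound derived in Lemma \ref{Lemma 1.2} yielding continuity from $H^{1}(0,T)$ into $H_{0}^{1}(I)$ (just specialize the estimate $\sup_{t}\|F(q,t)\|_{H_{0}^{1}}\lesssim\|q\|_{H^{1}(0,T)}$ to $t=T$). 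Hence $q\mapsto F(q,T)$ coincides with its Fr\'echet derivative, and the chain rule gives $\alpha\mapsto\Gamma(\alpha)\in C^{1}(H^{1}(0,T);H_{0}^{1}(I))$ with
\[
d_{\alpha}\Gamma(u)=F\bigl(d_{\alpha}V(u),T\bigr)\,.
\]

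For the second part, I would reproduce the decomposition used in step 2 of the proof of Proposition \ref{Proposition 3}. Fix $\lambda\in\mathbb{C}\setminus\sigma_{\Delta_{I}^{D}}$, write $\psi_{0}=\phi_{0}^{\lambda}+q(0)\mathcal{G}_{0}^{\lambda}(\cdot,0)$ with $\phi_{0}^{\lambda}\in H^{2}\cap H_{0}^{1}(I)$ fixed, set $q=V(\alpha)$, $w(t)=q(t)-q(0)$, and recall from (\ref{phi 0.1})--(\ref{Qu}) that the regular part at time $T$ reads
\[
\Gamma(\alpha)-q(T)\mathcal{G}_{0}^{\lambda}(\cdot,0)
=e^{iT\Delta_{I}^{D}}\phi_{0}^{\lambda}+\bigl[F(w,T)-w(T)\mathcal{G}_{0}^{\lambda}(\cdot,0)\bigr]+q(0)\,Q(T)\,.
\]
The plan is to check that every summand depends on $\alpha$ as a $C^{1}$ map into $H^{2}\cap H_{0}^{1}(I)$. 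The first is constant in $\alpha$ and lies in $H^{2}\cap H_{0}^{1}(I)$ by Stone's theorem. For $q(0)\,Q(T)$, note that $q(0)=-\alpha(0)\psi_{0}(0)$ depends linearly and continuously on $\alpha$ (the evaluation $H^{1}(0,T)\to\mathbb{C}$ is bounded), while $Q(T)\in H^{2}\cap H_{0}^{1}(I)$ is $\alpha$-independent by (\ref{Qu}); hence this term is even affine-continuous in $\alpha$.

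The central point is the bracket $\mathcal{B}(w):=F(w,T)-w(T)\mathcal{G}_{0}^{\lambda}(\cdot,0)$. I would promote the identity (\ref{phi 4}) proved in Proposition \ref{Proposition 3},
\[
-\frac{d^{2}}{dx^{2}}\mathcal{B}(w)=iF(w',T)+\lambda w(T)\,\mathcal{G}_{0}^{\lambda}(\cdot,0)\,,
\]
to a \emph{continuity statement}: combining Lemma \ref{Lemma 1.2} (which bounds $F(w',T)$ in $L^{2}(I)$ by $\|w'\|_{L^{2}(0,T)}$) with the trace inequality $|w(T)|\lesssim\|w\|_{H^{1}(0,T)}$, one gets that the linear map $w\mapsto\mathcal{B}(w)$ is bounded from $H^{1}(0,T)$ into $H^{2}\cap H_{0}^{1}(I)$. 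Since $\alpha\mapsto w$ is $C^{1}$ from $H^{1}(0,T)$ to itself (by Lemma \ref{Lemma 4} and continuity of the evaluation at $0$), a final application of the chain rule yields the claimed $C^{1}(H^{1}(0,T);H^{2}\cap H_{0}^{1}(I))$ regularity of the regular part.

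The main obstacle I anticipate is a bookkeeping one rather than a conceptual one: ensuring that the decomposition, the subtraction of $w(T)\mathcal{G}_{0}^{\lambda}$, and the boundary conditions cooperate so that $\mathcal{B}(w)$ really lands in $H^{2}\cap H_{0}^{1}(I)$ (and not merely $H^{2}(I\setminus\{0\})$). This is exactly the content of (\ref{Fourier coefficients 4})--(\ref{phi 4}) from the previous proposition; the only thing left to do here is to repackage those estimates as the boundedness of a linear operator from $H^{1}(0,T)$ to $H^{2}\cap H_{0}^{1}(I)$, at which point the $C^{1}$ statement is automatic from the linearity of $F$ and $\mathcal{B}$ in their first argument.
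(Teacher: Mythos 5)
Your proposal is correct and follows essentially the same route as the paper: the same splitting $\Gamma(\alpha)=e^{iT\Delta_{I}^{D}}\psi_{0}+F(V(\alpha),T)$ with Lemma \ref{Lemma 4} and the bound (\ref{bound1.0}) for the $H_{0}^{1}(I)$ statement, and the decomposition (\ref{phi 0.1})--(\ref{Qu}) together with the identity (\ref{phi 4}) for the regular part; where the paper verifies continuity of the G\^{a}teaux differential and invokes Theorem \ref{prodi}, you use the linearity and boundedness of $F(\cdot,T)$ and of $\mathcal{B}$ plus the chain rule, which is an equivalent (and arguably cleaner) packaging. The one point to state explicitly is that (\ref{phi 4}), and hence the boundedness of $w\mapsto\mathcal{B}(w)$ into $H^{2}\cap H_{0}^{1}(I)$, holds on the subspace $\left\{ w\in H^{1}(0,T):w(0)=0\right\}$ rather than on all of $H^{1}(0,T)$ (otherwise the integration by parts leaves a non-$L^{2}$ term proportional to $w(0)\sum_{k\in D}e^{-i\lambda_{k}t}\psi_{k}$), but your $w=V(\alpha)-\left[V(\alpha)\right](0)$ does lie there, so the argument goes through.
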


\begin{proof}
The continuity of $\Gamma(\alpha)$ follows directly from the continuity of the
map $\alpha\rightarrow V(\alpha)$ and the estimate (\ref{bound1.0}) in Lemma
\ref{Lemma 1.2}. Next consider the differential map $\Gamma^{\prime}%
:H^{1}(0,T)\rightarrow\mathcal{L}(H^{1}(0,T),\,H_{0}^{1}(I))$. Our aim is to
prove that $\Gamma^{\prime}$ is continuous in the operator norm. Due to the
Theorem \ref{prodi}, it is enough to prove that $\alpha\rightarrow d_{\alpha
}^{G}\Gamma$ is continuous. The action of $d_{\alpha}^{G}\Gamma$ on $u\in
H^{1}(0,T)$ is: $d_{\alpha}^{G}\Gamma(u)=F(d_{\alpha}^{G}V(u),T)$; for
$\alpha,\bar{\alpha}\in H^{1}(0,T)$ the difference: $d_{\alpha}^{G}%
\Gamma-d_{\bar{\alpha}}^{G}\Gamma$ is expressed by%
\[
d_{\alpha}^{G}\Gamma-d_{\bar{\alpha}}^{G}\Gamma=F(d_{\alpha}^{G}%
V(u)-d_{\bar{\alpha}}^{G}V(u),T)
\]
Using (\ref{bound1.0}), one has%
\begin{multline*}
\left\vert \left\vert \left\vert d_{\alpha}^{G}\Gamma-d_{\bar{\alpha}}%
^{G}\Gamma\right\vert \right\vert \right\vert =\sup_{\substack{u\in
H^{1}(0,T)\\\left\Vert u\right\Vert _{H^{1}(0,T)}=1}}\left\Vert F(d_{\alpha
}^{G}V(u)-d_{\bar{\alpha}}^{G}V(u),T)\right\Vert _{H_{0}^{1}(I)}\lesssim\\
\lesssim\sup_{\substack{u\in H^{1}(0,T)\\\left\Vert u\right\Vert _{H^{1}%
(0,T)}=1}}\left\Vert d_{\alpha}^{G}V(u)-d_{\bar{\alpha}}^{G}V(u)\right\Vert
_{H^{1}(0,T)}=\left\vert \left\vert \left\vert d_{\alpha}^{G}V-d_{\bar{\alpha
}}^{G}V\right\vert \right\vert \right\vert
\end{multline*}
then, the continuity of $\alpha\rightarrow d_{\alpha}^{G}\Gamma$ follows from
the continuity of $\alpha\rightarrow d_{\alpha}^{G}V$ proved in Lemma
\ref{Lemma 4}.

Let introduce the map%
\begin{equation}
\mu(\alpha)=\Gamma(\alpha)-V(\alpha)(T)\mathcal{G}_{0}^{\lambda}(\cdot,0)
\label{mu_alpha}%
\end{equation}
representing the regular part of the state at time $T$. Combining the result
of Lemma \ref{Lemma 4} and the first part of this proof, one has: $\mu
(\alpha)\in C^{1}(H^{1}(0,T),\,H_{0}^{1}(I))$. To achive our result it remains
to prove that: $-\partial_{x}^{2}\mu(\alpha)$ belongs to $C^{1}(H^{1}%
(0,T),\,L^{2}(I))$. According to (\ref{phi 0.1})-(\ref{Q(t)}), this can be
written as%
\[
-\frac{d^{2}}{dx^{2}}\left[  e^{iT\Delta_{I}^{D}}\phi_{0}^{\lambda
}+F(w,T)-w(T)\mathcal{G}_{0}^{\lambda}(\cdot,0)+\left[  V(\alpha)\right]
(0)Q(T)\right]
\]
where $w(t)=\left[  V(\alpha)\right]  (t)-\left[  V(\alpha)\right]  (0)$,
while the regularity of the terms: $-\frac{d^{2}}{dx^{2}}e^{iT\Delta_{I}^{D}%
}\phi_{0}^{\lambda}$ and $Q(T)$ is discussed in Proposition
\ref{Proposition 3}. For $\alpha,\tilde{\alpha}\in H^{1}(0,T)$, the difference
$\partial_{x}^{2}\left(  \mu(\alpha)-\mu(\tilde{\alpha})\right)  $ is%
\begin{align*}
\mu(\alpha)-\mu(\tilde{\alpha})  &  =-\frac{d^{2}}{dx^{2}}\left[  \left(
F\left(  w,T\right)  -F\left(  \tilde{w},T\right)  \right)  -\left(
w(T)-\tilde{w}(T)\right)  \mathcal{G}_{0}^{\lambda}\right]  +\\
&  -\left(  \left[  V(\alpha)\right]  (0)-\left[  V(\tilde{\alpha})\right]
(0)\right)  \frac{d^{2}}{dx^{2}}Q(T)\,.
\end{align*}
Using (\ref{phi 4}), one has%
\begin{align*}
\mu(\alpha)-\mu(\tilde{\alpha})  &  =i\left(  F\left(  w^{\prime},T\right)
-F\left(  \tilde{w}^{\prime},T\right)  \right)  +\lambda\left(  w(T)-\tilde
{w}(T)\right)  \mathcal{G}_{0}^{\lambda}+\\
&  -\left(  \left[  V(\alpha)\right]  (0)-\left[  V(\tilde{\alpha})\right]
(0)\right)  \frac{d^{2}}{dx^{2}}Q(T)
\end{align*}
$w^{\prime}$ and $\tilde{w}^{\prime}$ respectively denoting the time
derivatives of the functions $V(\alpha)$ and $V(\tilde{\alpha})$. The
continuity of $\mu(\alpha)$, then, follows from the continuity of $F(\cdot,T)$
(expressed by (\ref{bound1.0})) and of the map $V(\cdot)$. The G\^{a}teaux
derivative $d_{\alpha}^{G}\mu$ acts on $u\in H^{1}(0,T)$ as%
\[
d_{\alpha}^{G}\mu(u)=-\frac{d^{2}}{dx^{2}}\left[  F(d_{\alpha}^{G}%
w(u),T)-\left[  d_{\alpha}^{G}w(u)\right]  (T)\mathcal{G}_{0}^{\lambda}%
(\cdot,0)+\left[  d_{\alpha}^{G}V(\alpha)\right]  (0)Q(T)\right]  \,.
\]
Proceeding as before, the difference $d_{\alpha}^{G}\mu(u)-d_{\tilde{\alpha}%
}^{G}\mu(u)$ is%
\begin{align*}
d_{\alpha}^{G}\mu(u)-d_{\tilde{\alpha}}^{G}\mu(u)  &  =i\left(  F\left(
\partial_{t}d_{\alpha}^{G}w(u)-\partial_{t}d_{\tilde{\alpha}}^{G}%
w(u),T\right)  \right)  +\lambda\left(  \left[  d_{\alpha}^{G}w(u)\right]
(T)-\left[  d_{\tilde{\alpha}}^{G}w(u)\right]  (T)\right)  \mathcal{G}%
_{0}^{\lambda}+\\
&  -\left(  \left[  d_{\alpha}^{G}w(u)\right]  (0)-\left[  d_{\tilde{\alpha}%
}^{G}w(u)\right]  (0)\right)  \frac{d^{2}}{dx^{2}}Q(T)\,.
\end{align*}
Using (\ref{bound1.0}) and recalling, from Lemma \ref{Lemma 4}, that
$\alpha\rightarrow d_{\alpha}^{G}V$ is continuous in the $\mathcal{L}\left(
H^{1}(0,T),\,H^{1}(0,T)\right)  $ operator norm, this relation leads to
continuity of $\alpha\rightarrow d_{\alpha}^{G}\mu$ in the $\mathcal{L}\left(
H^{1}(0,T),\,L^{2}(I)\right)  $-norm.
\end{proof}

\subsection{\label{Sec_control_lin}The linearized map}

Next we discuss the surjectivity of the linearized map $d_{\alpha}^{G}\Gamma$
for $\alpha\in H_{0}^{1}(0,T)$. Since the transformations $V$ and $d_{\alpha
}^{G}V$ preserves Dirichlet boundary conditions, in this framework the result
of Lemma \ref{Lemma 4} raphrases as:

\begin{lemma}
\label{Lemma 4.1}The map $V$ defined by (\ref{G-def2}) is $C^{1}(H_{0}%
^{1}(0,T);\,H_{0}^{1}(0,T))$.
\end{lemma}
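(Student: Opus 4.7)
The plan is to exploit the fact that Lemma \ref{Lemma 4} has already established $V\in C^{1}(H^{1}(0,T);\,H^{1}(0,T))$ together with the associated estimates; since $H_{0}^{1}(0,T)$ is a closed subspace of $H^{1}(0,T)$, it suffices to check that both $V$ and its G\^{a}teaux derivative $d_{\alpha}^{G}V$, when restricted to inputs vanishing at $t=0$ and $t=T$, produce outputs vanishing at $t=0$ and $t=T$.

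First, I would read off the boundary behaviour of $V(\alpha)$ directly from the defining equation (\ref{G-def2}). The identity
\[
V(\alpha)(t)=-\alpha(t)\left(e^{it\Delta_{I}^{D}}\psi_{0}(0)+\frac{i}{\pi}UV(\alpha)(t)\right)
\]
displays $V(\alpha)$ as the product of $\alpha$ and a bounded continuous factor (the term in brackets is continuous in $t$ by Lemma \ref{Lemma 1.2} and the fact that $V(\alpha)\in H^{1}(0,T)\hookrightarrow C([0,T])$). Hence for $\alpha\in H_{0}^{1}(0,T)$ the vanishing $\alpha(0)=\alpha(T)=0$ forces $V(\alpha)(0)=V(\alpha)(T)=0$, i.e.\ $V(\alpha)\in H_{0}^{1}(0,T)$.

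Second, I would repeat the same inspection for the G\^{a}teaux derivative, using expression (\ref{dV_def2}). Writing
\[
d_{\alpha}^{G}V(u)=-u\left(e^{it\Delta_{I}^{D}}\psi_{0}(0)+\frac{i}{\pi}UV(\alpha)\right)-\alpha\left(u(0)\,e^{it\Delta_{I}^{D}}\mathcal{G}_{0}^{\lambda}(\cdot,0)\bigr|_{x=0}+\frac{i}{\pi}U\bigl(d_{\alpha}^{G}V(u)\bigr)\right),
\]
where the first factor is obtained from $V(\alpha)/\alpha$ via the same reasoning as above, I observe that every term in the right hand side is multiplied either by $u$ or by $\alpha$. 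For $u,\alpha\in H_{0}^{1}(0,T)$ the values at $t=0$ and $t=T$ are therefore zero, and so $d_{\alpha}^{G}V(u)\in H_{0}^{1}(0,T)$. Note that the extra condition $u(0)=0$ also kills the otherwise possibly non-vanishing contribution $\alpha(t)u(0)e^{it\Delta_{I}^{D}}\mathcal{G}_{0}^{\lambda}(0,0)$ at interior times, which is harmless here but will matter when propagating the Dirichlet condition through $\Gamma$ in the next section.

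Third, the continuity of the restricted derivative $\alpha\mapsto d_{\alpha}^{G}V$ as a map from $H_{0}^{1}(0,T)$ into $\mathcal{L}(H_{0}^{1}(0,T),H_{0}^{1}(0,T))$ follows immediately from its continuity as a map $H^{1}(0,T)\to\mathcal{L}(H^{1}(0,T),H^{1}(0,T))$ proved in Lemma \ref{Lemma 4}, because restricting a bounded linear operator to a closed subspace and considering its image in the same subspace only decreases the operator norm. An application of Theorem \ref{prodi} then upgrades G\^{a}teaux to Fr\'{e}chet differentiability, and $V$ is of class $C^{1}(H_{0}^{1}(0,T);\,H_{0}^{1}(0,T))$. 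The only substantive verification is the vanishing at $t=T$ of $V(\alpha)$ and of $d_{\alpha}^{G}V(u)$, and this is transparent from the product structure; no new estimate is required beyond those already in Lemma \ref{Lemma 1.3} and Lemma \ref{Lemma 4}.
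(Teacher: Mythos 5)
Your argument is correct and is essentially the paper's, which disposes of this lemma with the one-line remark that $V$ and $d_{\alpha}^{G}V$ preserve Dirichlet boundary conditions in time; you simply make that remark explicit by reading the vanishing at $t=0$ and $t=T$ off the product structure of (\ref{G-def2}) and (\ref{dV_def2}) and then restricting the operator-norm continuity from Lemma \ref{Lemma 4}. One small mis-citation: the continuity in $t$ of the bracketed factor $e^{it\Delta_{I}^{D}}\psi_{0}(0)+\frac{i}{\pi}UV(\alpha)$ follows from Lemma \ref{Lemma 1.1} (or the integration by parts (\ref{C2-1})) together with the embedding $H^{1}(0,T)\hookrightarrow C([0,T])$, not from Lemma \ref{Lemma 1.2}, which concerns the spatial map $F$.
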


This also implies that the state at the initial and finial times possesses
only regular part. Thus, Proposition \ref{Proposition 4} become:

\begin{proposition}
\label{Proposition 4.1}For $\psi_{0}\in H^{2}\cap H_{0}^{1}(I)$, the map
$\alpha\rightarrow\Gamma(\alpha)$ is of class $C^{1}(H_{0}^{1}(0,T),\,H^{2}%
\cap H_{0}^{1}(I))$.
\end{proposition}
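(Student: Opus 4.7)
The plan is to derive Proposition \ref{Proposition 4.1} by combining Lemma \ref{Lemma 4.1} with the $H^1$-version Proposition \ref{Proposition 4}. The key observation is that requiring $\alpha\in H_0^1(0,T)$ forces both endpoint values of the charge to vanish: Lemma \ref{Lemma 4.1} yields $V(\alpha)\in H_0^1(0,T)$, hence $q(T)=V(\alpha)(T)=0$, while $\alpha(0)=0$ makes the jump condition in (\ref{b.c. 2}) trivial, so that any $\psi_0\in H^2\cap H_0^1(I)$ automatically lies in $D(H_{\alpha(0)})$ with $q(0)=-\alpha(0)\psi_0(0)=0$. Consequently both the initial and the final states carry no singular $\mathcal{G}_0^\lambda(\cdot,0)$ contribution and coincide with their regular parts.

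Using the definition (\ref{mu_alpha}), this identifies $\Gamma(\alpha)=\mu(\alpha)$ pointwise on $H_0^1(0,T)$. Proposition \ref{Proposition 4} guarantees $\mu\in C^1(H^1(0,T),\,H^2\cap H_0^1(I))$; since $H_0^1(0,T)$ embeds as a closed subspace of $H^1(0,T)$ with continuous inclusion, the restriction $\mu|_{H_0^1(0,T)}$ remains of class $C^1$ with target $H^2\cap H_0^1(I)$, which together with $\Gamma=\mu$ on $H_0^1(0,T)$ yields the conclusion. As a sanity check at the infinitesimal level, for $\alpha,u\in H_0^1(0,T)$ the linearized equation (\ref{dV_def2}) has the structure of a charge equation with source in $\mathcal{H}_T$; the same argument used for Lemma \ref{Lemma 4.1} then shows $d_\alpha^G V(u)\in H_0^1(0,T)$ and $[d_\alpha^G V(u)](T)=0$, whence $d_\alpha^G\Gamma(u)=d_\alpha^G\mu(u)$ on tangent vectors $u\in H_0^1(0,T)$, consistently with the identification above.

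There is no genuine obstacle: the whole content of Proposition \ref{Proposition 4.1} is the propagation in time of the Dirichlet-type endpoint condition on $\alpha$ to the charge $V(\alpha)$, which is already encoded in Lemma \ref{Lemma 4.1}. The gain over Proposition \ref{Proposition 4} is that the $H^2$-regularity of $\Gamma(\alpha)$ at time $T$ is inherited directly from the regular-part map $\mu$, with no residual singular $\mathcal{G}_0^\lambda(\cdot,0)$ contribution to subtract.
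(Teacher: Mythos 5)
Your argument is correct and is essentially the paper's own reasoning made explicit: the paper states Proposition \ref{Proposition 4.1} as an immediate consequence of Lemma \ref{Lemma 4.1} (the charge $V(\alpha)$ inherits the Dirichlet endpoint conditions from $\alpha$), so that the state at times $0$ and $T$ has no singular $\mathcal{G}_{0}^{\lambda}(\cdot,0)$ part, $\Gamma$ coincides with the regular-part map $\mu$ on $H_{0}^{1}(0,T)$, and the $C^{1}(H^{1}(0,T),\,H^{2}\cap H_{0}^{1}(I))$ regularity of $\mu$ from Proposition \ref{Proposition 4} restricts to the closed subspace $H_{0}^{1}(0,T)$. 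Your additional checks (that $q(0)=-\alpha(0)\psi_{0}(0)=0$ places $\psi_{0}\in D(H_{\alpha(0)})$, and that the linearizations agree) are consistent with, and merely flesh out, the paper's one-line justification.
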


It is whorthwhile to notice that, when $\psi_{0}$ is given by a linear
superposition of eigenstates of odd kind (i.e. $\psi_{k}=\sin\frac{k}%
{2}x,\ k\ $even), the source term in (\ref{G-def2}), $e^{it\Delta}\psi_{0}%
(0)$, is null at each $t$, and the charge, $V(\alpha)$ in the above notation,
is identically zero. In these conditions, the particle does not 'feel' the
interaction and the evolution is simply determined by the free propagator
$e^{it\Delta_{I}^{D}}$. This implies: $\Gamma(\alpha)=e^{iT\Delta_{I}^{D}}%
\psi_{0}$. In order to avoid this situation, we assume the initial state
$\psi_{0}$ to be fixed in the subspace of even functions%
\begin{equation}
\mathcal{W}=\left\{  \varphi\in H^{2}\cap H_{0}^{1}(I)\,\left\vert
\,\varphi=\sum_{k\in D}c_{k}\psi_{k}\right.  \right\}
\label{stato iniziale 1}%
\end{equation}
This choice also implies that $\Gamma$ takes values in $\mathcal{W}$, as can
be cheked by its explicit formula. Due to Proposition \ref{Proposition 4.1},
one has: $\Gamma\in C^{1}(H_{0}^{1}(0,T),\,\mathcal{W})$.

The linearized map $d_{\alpha}^{G}\Gamma$ is defined according to%
\begin{equation}
d_{\alpha}^{G}\Gamma(u)=\frac{i}{\sqrt{\pi}}\sum\limits_{k\in D}\left(
\int_{0}^{T}q(s)e^{-i\lambda_{k}\left(  T-s\right)  }\,ds\right)  \,\psi_{k}
\label{Lin1}%
\end{equation}%
\begin{equation}
-q(t)=\frac{i}{\pi}\alpha(t)Uq(t)+u(t)\left(  e^{it\Delta}\psi_{0}(0)+\frac
{i}{\pi}U\left[  V(\alpha)\right]  (t)\right)  \label{Lin2}%
\end{equation}
Let $\psi_{T}\in\mathcal{W}$ be a target state for the linear transformation
(\ref{Lin1})-(\ref{Lin2}). We address the following question: does $u\in
H_{0}^{1}(0,T)$ exists such that $d_{\alpha}^{G}\Gamma(u)=\psi_{T}$ ?\newline
Consider at first the equation%
\begin{equation}
\psi_{T}=\frac{i}{\sqrt{\pi}}\sum\limits_{k\in D}\left(  \int_{0}^{T}%
\rho(s)e^{-i\lambda_{k}\left(  T-s\right)  }\,ds\right)  \,\psi_{k}
\label{Lin1.1}%
\end{equation}
w.r.t. the variable $\rho$. Denoting with $c_{k}$ the Fourier coefficients of
$\psi_{T}$ in the basis $\left\{  \psi_{k}\right\}  _{k\in D}$, the above
equation is equivalent to%
\begin{equation}
c_{k}=\frac{i}{\sqrt{\pi}}\int_{0}^{T}\rho(s)e^{-i\lambda_{k}\left(
T-s\right)  }\,ds,\qquad k\in D \label{Lin1.2}%
\end{equation}
This is a trigonometric moment problem; the existence of solutions is
established by K. Beauchard and C. Laurent in \cite{BeLa} for arbitrary values
of $T>0$ according to the following result.

\begin{theorem}
\label{Th_K}Let $T>0$ and $\left\{  \omega_{k}\right\}  _{k\in\mathbb{N}_{0}}%
$, with $\mathbb{N}_{0}=\mathbb{N\cup}\left\{  0\right\}  $, be an increasing
sequence of $\left[  0,+\infty\right)  $ such that $\omega_{0}=0$ and%
\[
\lim_{k\rightarrow+\infty}\left(  \omega_{n+1}-\omega_{n}\right)  =+\infty
\]
For every $c\in\ell^{2}(\mathbb{N}_{0},\mathbb{C})$, there exist infinitely
many $v\in L^{2}\left(  \left(  0,T\right)  ,\mathbb{C}\right)  $ solving%
\begin{equation}
c_{k}=\int_{0}^{T}v(s)e^{i\omega_{n}s}\,ds,\qquad n\in\mathbb{N}_{0}\,.
\label{moment_eq}%
\end{equation}

\end{theorem}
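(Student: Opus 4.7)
The plan is to show that $\mathcal{E}=\{e^{i\omega_{n}\cdot}\}_{n\ge 0}$ is a Riesz sequence in $L^{2}(0,T)$ whose closed linear span has infinite codimension, and then to read off the solvability and non-uniqueness of \eqref{moment_eq} by inverting the associated isomorphism onto $\ell^{2}$. Thus the argument splits into three steps: a Riesz-basis estimate, the construction of a particular solution via a biorthogonal family, and the use of the orthogonal complement to produce infinitely many solutions.

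\textbf{Step 1 (Ingham-type two-sided bound).} The first task is to establish the existence of constants $C_{1},C_{2}>0$ such that
\begin{equation*}
C_{1}\sum_{n}|a_{n}|^{2}\;\le\;\Bigl\|\sum_{n}a_{n}e^{i\omega_{n}\cdot}\Bigr\|_{L^{2}(0,T)}^{2}\;\le\;C_{2}\sum_{n}|a_{n}|^{2},\qquad (a_{n})\in\ell^{2}(\mathbb{N}_{0}).
\end{equation*}
Because $\omega_{n+1}-\omega_{n}\to+\infty$, only finitely many consecutive gaps can fail the classical Ingham separation $\omega_{n+1}-\omega_{n}>2\pi/T$. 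Applied to the high-frequency tail $\{\omega_{n}\}_{n\ge N}$, Ingham's inequality gives the two-sided bound on $\mathrm{span}\{e^{i\omega_{n}\cdot}\}_{n\ge N}$. The finite low-frequency block $\{e^{i\omega_{n}\cdot}\}_{n<N}$ is linearly independent on $(0,T)$ (distinct exponentials), so its Gramian is invertible and gives the corresponding bound trivially. Combining the two blocks via a finite-rank perturbation / direct sum argument upgrades the bound to the entire family.

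\textbf{Step 2 (Existence of a solution).} Denote $E=\overline{\mathrm{span}}\,\mathcal{E}\subset L^{2}(0,T)$ and let $A:L^{2}(0,T)\to\ell^{2}(\mathbb{N}_{0})$ be the analysis operator $(Av)_{n}=\int_{0}^{T}v(s)e^{i\omega_{n}s}\,ds$. Step 1 implies that $A|_{E}:E\to\ell^{2}$ is a bounded bijection with bounded inverse. Equivalently, $\mathcal{E}$ admits a biorthogonal family $\{\phi_{n}\}\subset E$ with $\int_{0}^{T}\phi_{n}(s)e^{i\omega_{m}s}\,ds=\delta_{nm}$ and $\sup_{n}\|\phi_{n}\|_{L^{2}(0,T)}<\infty$. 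For any $c\in\ell^{2}$ one sets $v_{0}=\sum_{n}c_{n}\phi_{n}\in E$, with convergence in $L^{2}(0,T)$ guaranteed by the Riesz basis property of $\{\phi_{n}\}$ in $E$, and verifies $Av_{0}=c$.

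\textbf{Step 3 (Infinitely many solutions).} It remains to show that $E^{\perp}$ is infinite-dimensional. The lacunary condition $\omega_{n+1}-\omega_{n}\to+\infty$ forces the upper density of $\{\omega_{n}\}$ to be zero, hence by the Paley-Wiener/Levinson completeness criterion for exponential systems the family $\mathcal{E}$ is not complete in $L^{2}(0,T)$ for any finite $T$; in fact its deficiency is infinite. Any $w\in E^{\perp}$ satisfies $Aw=0$, so $v=v_{0}+w$ still solves \eqref{moment_eq}, producing infinitely many distinct solutions.

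\textbf{Main obstacle.} The technical heart of the argument is Step 1: the classical Ingham inequality requires the separation condition on all consecutive pairs, whereas here it may fail for finitely many initial indices. Handling those low-frequency terms without destroying the lower Riesz bound -- by isolating the finite block and arguing that the combined system remains a Riesz sequence -- is the delicate point. Once this asymptotic-Ingham estimate is secured, Steps 2 and 3 are standard functional-analytic bookkeeping around the isomorphism $A|_{E}$.
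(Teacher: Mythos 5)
Your argument is correct in outline and rests on the same functional-analytic core as the paper's proof (a Riesz-basis property for the exponential family, followed by inversion of the analysis operator), but the execution is genuinely different. The paper symmetrizes the frequencies by setting $\omega_{-n}=-\omega_{n}$, invokes Haraux's theorem to conclude that the two-sided family $\{e^{i\omega_{n}t}\}_{n\in\mathbb{Z}}$ is a Riesz basis of its closed span $\mathcal{F}$, obtains the isomorphism $J:\mathcal{F}\to\ell^{2}(\mathbb{Z})$, and produces infinitely many solutions from the freedom in choosing the negative-index components $\tilde{c}_{-n}$ of the extension of $c$. You instead work with the one-sided family, prove the Riesz-sequence bound yourself via Ingham's inequality on the high-frequency tail, build the particular solution from the biorthogonal family, and locate the non-uniqueness in $E^{\perp}$ rather than in the extension of the data; your route is more self-contained (it does not outsource the key estimate to a citation) and makes the source of the infinitely many solutions more transparent, at the price of having to carry out the Ingham analysis. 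One caveat: your ``finite-rank perturbation / direct sum argument'' in Step 1 is precisely where the content of Haraux's theorem sits, and a generic perturbation statement is not enough --- to add the finite low-frequency block to the Ingham tail while preserving the lower Riesz bound you must know that no nontrivial combination of the added exponentials lies in the closed span of the tail, i.e.\ that the enlarged system is minimal. This follows from the same zero-density/incompleteness machinery you already invoke in Step 3 (adding one exponential at a time and using that the distance from the new exponential to the span of the rest is positive), so the gap is fillable with tools you have on the table, but as written Step 1 understates what is needed; also note that for ``infinitely many'' solutions a single nonzero element of $E^{\perp}$ already suffices, so the infinite-deficiency claim in Step 3 is stronger than required.
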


\begin{proof}
The proof rephrases the one given in Corollary 1 of \cite{BeLa}, where the
authors discuss the existence in $L^{2}\left(  \left(  0,T\right)
,\mathbb{R}\right)  $ under the particular condition: $c\in\ell^{2}%
(\mathbb{N}_{0},\mathbb{C})$, $c_{0}\in\mathbb{R}$. For sake of completeness
we give a sketch of it.

Let define: $\omega_{-n}=-\omega_{n}$ for $n\in\mathbb{N}_{0}$. According to
the result of \cite{Haraux} (referred as Theorem 1 in \cite{BeLa}), the family
$\left\{  e^{i\omega_{n}t}\right\}  _{n\in\mathbb{Z}}$ is a Riesz basis of the
subspace $\mathcal{F}$ defined by the clousure in $L^{2}(0,T)$ of the
$Span\left\{  e^{i\omega_{n}t}\right\}  _{n\in\mathbb{Z}}$. This condition is
equivalent to the existence of an isomorphism $J$ (we refer to Proposition 20
in \cite{BeLa}):%
\[%
\begin{array}
[c]{llll}%
\medskip J: & \mathcal{F} & \mathcal{\longrightarrow} & \ell^{2}%
(\mathbb{Z},\mathbb{C})\\
& v & \mathcal{\longrightarrow} & \int_{0}^{T}v(s)e^{i\omega_{n}s}\,ds
\end{array}
\]
Given $c\in\ell^{2}(\mathbb{N}_{0},\mathbb{C})$, we consider $\tilde{c}\in
\ell^{2}(\mathbb{Z},\mathbb{C})$ such that: $\tilde{c}_{k}=c_{k}$ for
$k\in\mathbb{N}_{0}$. To any choice of $\tilde{c}$ there corresponds an unique
solution to (\ref{moment_eq}) defined by $J^{-1}(\tilde{c})$.
\end{proof}

When the $\lambda_{k}$ coincide with the frequencies of the standard basis
$\left\{  e^{i\omega nt},\ \omega=\frac{2\pi}{T}\right\}  _{n\in\mathbb{Z}}$
in $L^{2}(0,T)$, equation (\ref{Lin1.2}) can be also interpreted as a
constraint over the Fourier coefficients of the function $\rho$. In this case,
it is possible to give a solution of the problem respecting Dirichlet boundary
conditions. These remarks are summarized in the following proposition.

\begin{proposition}
\label{Prop_moment}The following properties hold:\medskip\newline$1)$ For
$\left\{  c_{k}\right\}  \in\ell_{2}(D)$ and $T>0$, (\ref{Lin1.2}) admits
infinitely many solutions $\rho\in L^{2}(0,T)$.\medskip\newline$2)$ Let
$T\geq8\pi$. For any $\psi_{T}\in\mathcal{W}$, it is possible to find at least
one $\rho\in H_{0}^{1}(0,T)$ solving (\ref{Lin1.1}).
\end{proposition}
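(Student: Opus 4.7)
For statement (1), I would apply Theorem \ref{Th_K} directly. Rewrite (\ref{Lin1.2}) by pulling the phase out of the integral:
\[
c_k e^{i\lambda_k T} = \frac{i}{\sqrt{\pi}}\int_0^T \rho(s)\,e^{i\lambda_k s}\,ds,\qquad k\in D.
\]
Enumerate the frequencies as $\omega_0=0$ and $\omega_n = \lambda_{2n-1}=(2n-1)^2/4$ for $n\geq 1$, extending the data by $c_0:=0$. The gap condition $\omega_{n+1}-\omega_n = 2n\to+\infty$ is immediate, so Theorem \ref{Th_K} yields infinitely many $v\in L^2(0,T;\mathbb{C})$ solving the corresponding moment equations; rescaling by $i/\sqrt{\pi}$ recovers $\rho\in L^2(0,T)$.

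For statement (2), the plan is to exploit the identity $\lambda_k=\omega k^2$ with $\omega=2\pi/(8\pi)=1/4$, which (by Remark \ref{Rem_base}) places the frequencies $\{e^{-i\lambda_k s}\}_{k\in D}$ inside the canonical Fourier basis of $L^2(0,8\pi)$. First I would reduce to $T_0=8\pi$: for $T>8\pi$, a solution $\rho_0\in H_0^1(0,8\pi)$ of the moment problem with phase-shifted data $c_k e^{i\lambda_k(T-8\pi)}$ extends by zero to a function in $H_0^1(0,T)$ solving (\ref{Lin1.1}) (the $H_0^1$ class is preserved because $\rho_0$ vanishes at both endpoints). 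On $(0,T_0)$, expand $\rho(s)=\sum_{n\in\mathbb{Z}}\hat\rho_n e^{i\omega n s}$; the relation $\int_0^{T_0}\rho(s)e^{i\lambda_k s}\,ds=T_0\,\hat\rho_{-k^2}$ turns (\ref{Lin1.2}) into an explicit prescription of the Fourier coefficients $\hat\rho_{-k^2}$ in terms of $c_k$, while all other coefficients remain free.

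Accordingly I would set
\[
\hat\rho_{-k^2} = \frac{-i\sqrt{\pi}}{T_0}\,c_k\,e^{i\lambda_k T},\qquad k\in D,
\]
and take every other Fourier coefficient to be zero except one auxiliary coefficient $\hat\rho_{n_0}$ at an index $n_0\notin\{-k^2:k\in D\}$ (say $n_0=0$), which serves as a single degree of freedom. Since $\psi_T\in\mathcal{W}\subset H^2(I)$, the coefficients satisfy $\sum_{k\in D}k^4|c_k|^2<\infty$, which is precisely the periodic $H^1$-condition $\sum_n n^2|\hat\rho_n|^2<\infty$ along the constrained indices (where $n^2=k^4$). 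Cauchy--Schwarz applied to $(k^{-2})\in\ell^2$ and $(k^2 c_k)\in\ell^2$ also gives absolute convergence of $\sum_{k\in D}|c_k|$, so $\rho(0)=\sum_n\hat\rho_n$ is a well-defined complex number; tuning $\hat\rho_{n_0}$ to cancel it forces $\rho(0)=0$, hence $\rho(T_0)=0$ by periodicity, so $\rho\in H_0^1(0,T_0)$.

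The main obstacle is the $H^1$ regularity: the moment equation determines $\hat\rho_{-k^2}$ with magnitude $\sim|c_k|$, so that the weighted $\ell^2$-summability $\sum k^4|c_k|^2<\infty$ is \emph{exactly} what is needed for $\rho\in H^1$; this is precisely why the $H^2$-regularity built into $\mathcal{W}$ is invoked. The Dirichlet boundary condition $\rho(0)=0$ is then a cheap one-dimensional correction using the single unconstrained mode $\hat\rho_{n_0}$, and the reduction from $T\geq 8\pi$ to $T_0=8\pi$ via zero-extension completes the argument.
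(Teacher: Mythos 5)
Your proof is correct and follows essentially the same route as the paper: Theorem \ref{Th_K} for part (1), and for part (2) the observation that at $T=8\pi$ the moment equations prescribe exactly the Fourier coefficients $\hat\rho_{-k^2}$ of $\rho$ in the standard basis of $L^2(0,8\pi)$, with the $H^2$-regularity of $\psi_T\in\mathcal{W}$ supplying the $\ell^2$-summability of $\{k^2c_k\}$ needed for $\rho\in H^1$ and zero-extension handling $T>8\pi$. The only (equally valid) difference is the mechanism enforcing the Dirichlet condition: the paper mirrors each prescribed coefficient to the index $+k^2$, so that each mode is a multiple of $\sin(\lambda_k t)$ vanishing at both endpoints, whereas you cancel $\rho(0)=\sum_n\hat\rho_n$ with a single free auxiliary mode.
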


\begin{proof}
$1)$ Let $\psi_{T}=\sum_{k\in D}c_{k}\psi_{k}$, and consider the corresponding
moment equation (\ref{Lin1.2}). For $k\in D$, we set: $k=2n-1$, $\omega
_{n}=\lambda_{2n-1}$ with $n\in\mathbb{N}$, and $\omega_{0}=0$; with this
notation, our problem writes as%
\begin{equation}
\tilde{c}_{n}=\int_{0}^{T}\tilde{\rho}(s)e^{i\omega_{n}s}\,ds,\qquad
n\in\mathbb{N}_{0} \label{control_1}%
\end{equation}%
\begin{equation}
\tilde{\rho}=\frac{i}{\sqrt{\pi}}\rho\,,\qquad\left.  \tilde{c}_{n,T}%
\medskip\right\vert _{n\in\mathbb{N}}=e^{i\lambda_{k}T}c_{2n-1}\quad
\text{and\quad}\tilde{c}_{0}=0 \label{control_1.1}%
\end{equation}
According to theorem \ref{Th_K}, (\ref{control_1}) admits infinitely many
solutions $\tilde{\rho}\in L^{2}(0,T)$, which are determined by fixing the
extensions of the families $e^{i\omega_{n}s}$ and $\tilde{c}_{n,T}$ to
$n\in\mathbb{Z}$.

$2)$ For $T=8\pi$, the functions $\left\{  e^{-i\lambda_{k}t}\right\}  _{k\in
D}$ form a subsystem of the standard basis $\left\{  e^{i\omega nt}%
,\ \omega=\frac{2\pi}{T}\right\}  _{n\in\mathbb{Z}}$ in $L^{2}(0,T)$. Thus,
the equation (\ref{Lin1.2}) partially determines the Fourier coefficients of
the function $\rho$. A particular solution $\rho_{0}$ is obtained by a
superposition respecting Dirichlet conditions on the boundary%
\begin{equation}
\frac{i}{\sqrt{\pi}}\rho_{0}(t)=\sum_{k\in D}e^{i\lambda_{k}T}c_{k}\left(
e^{-i\lambda_{k}t}-e^{i\lambda_{k}t}\right)  \,. \label{solution}%
\end{equation}
Since the regularity of $\psi_{T}\in\mathcal{W}$ implies: $\left\{
\,k^{2}c_{k}\right\}  \in\ell_{2}(D)$, it follows $\rho\in H_{0}^{1}(0,T)$.
For $T>8\pi$, the target $\psi_{T}$ is attained by $\rho$ such that: $\left.
\rho\right\vert _{t\in\left[  0,8\pi\right]  }=\rho_{0}$ is a solution of
(\ref{Lin1.1}) in $H_{0}^{1}(0,8\pi)$ and $\rho(t\geq8\pi)=0$.
\end{proof}

In order to solve the inverse problem related to (\ref{Lin1})-(\ref{Lin2}), we
proceed as follows: for a target state $\psi_{T}\in\mathcal{W}$, Proposition
\ref{Prop_moment} allows to determine (at least one) $q\in H_{0}^{1}(0,T)$
solving (\ref{Lin1}); then, this function is replaced in (\ref{Lin2}) in the
attempt of finding a suitable $u\in H_{0}^{1}(0,T)$ such that%
\begin{equation}
-q(t)=\frac{i}{\pi}\alpha(t)Uq(t)+u(t)\left(  e^{it\Delta}\psi_{0}(0)+\frac
{i}{\pi}U\left[  V(\alpha)\right]  (t)\right)
\end{equation}
The solvability of this equation w.r.t. $u$ is strictly connected with the
specific choice of the initial state, $\psi_{0}$, and of the linearization
point $\alpha$. In particular, if the function $\left(  e^{it\Delta}\psi
_{0}(0)+\frac{i}{\pi}U\left[  V(\alpha)\right]  (t)\right)  $ is different
from zero at each $t$, one can easly determine $u$ as%
\begin{equation}
u(t)=\left(  -q(t)-\frac{i}{\pi}\alpha(t)Uq(t)\right)  \left(  e^{it\Delta
}\psi_{0}(0)+\frac{i}{\pi}U\left[  V(\alpha)\right]  (t)\right)  ^{-1}%
\end{equation}
Consider a simplified framework where $\psi_{0}$ is a Laplacian's eigenstate
in $\mathcal{W}$%
\begin{equation}
\psi_{0}=\psi_{\bar{k}}\quad\bar{k}\ odd \label{stato iniziale 3}%
\end{equation}
and $\alpha=0$. In this setting, the equation (\ref{Lin2}) writes as%
\begin{equation}
-q(t)=\frac{1}{\sqrt{\pi}}u(t)\,e^{-i\lambda_{\bar{k}}t} \label{Lin4}%
\end{equation}
and the inverse problem%
\begin{equation}
\psi_{T}=-\frac{i}{\pi}\sum\limits_{k\in D}\left(  \int_{0}^{T}%
u(s)\,e^{-i\lambda_{\bar{k}}s}e^{-i\lambda_{k}\left(  T-s\right)
}\,ds\right)  \,\psi_{k} \label{Lin3}%
\end{equation}
is solved in $H_{0}^{1}(0,T)$ by the last point of Proposition
\ref{Prop_moment}. This result, and the regularity of the map $\Gamma$ leads
to the local steady state controllability of the system (\ref{G-def1}%
)-(\ref{G-def2}) around the point $\alpha=0$.

\begin{theorem}
[Local controllability]\label{local controllability}Let $T\geq8\pi$ and assume
(\ref{stato iniziale 3}). It exist an open neighbourhood $V\times P\subseteq
H_{0}^{1}(0,T)\times\mathcal{W}$ of the point $\left(  0,\psi_{\bar{k}%
}\right)  $ such that: $\left.  \Gamma\right\vert _{V}:V\rightarrow P$ is surjective
\end{theorem}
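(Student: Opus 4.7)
The plan is to obtain the statement as a consequence of the Banach space inverse function theorem applied to $\Gamma$ at the base point $\alpha = 0$. Two ingredients are needed: the $C^1$ regularity of $\Gamma$ as a map into $\mathcal{W}$, and a bounded linear right inverse for the Fréchet derivative $d_0 \Gamma : H_0^1(0,T) \to \mathcal{W}$. The first is already at our disposal: Proposition~\ref{Proposition 4.1} yields $\Gamma \in C^1(H_0^1(0,T),\,H^2 \cap H_0^1(I))$, and the discussion following (\ref{stato iniziale 1}) shows that $\mathcal{W}$ is invariant under $\Gamma$ whenever $\psi_0 \in \mathcal{W}$; in particular $\Gamma \in C^1(H_0^1(0,T),\,\mathcal{W})$ for $\psi_0 = \psi_{\bar{k}}$.

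For the second ingredient I would specialise (\ref{Lin1})-(\ref{Lin2}) to $\alpha = 0$ and $\psi_0 = \psi_{\bar{k}}$. As already observed in (\ref{Lin4})-(\ref{Lin3}), this collapses $d_0\Gamma$ to the explicit moment operator $u \mapsto -\frac{i}{\pi}\sum_{k \in D}\bigl(\int_0^T u(s)\,e^{-i\lambda_{\bar{k}}s}\,e^{-i\lambda_k(T-s)}\,ds\bigr)\,\psi_k$. Given a target $\psi_T = \sum_{k \in D} c_k \psi_k \in \mathcal{W}$, Proposition~\ref{Prop_moment}(2) produces, via the explicit formula (\ref{solution}), a solution $\rho_0 \in H_0^1(0,8\pi)$, extended by zero to $[8\pi,T]$ when $T > 8\pi$; inverting the algebraic rescaling in (\ref{Lin4}) delivers the corresponding $u \in H_0^1(0,T)$ with $d_0\Gamma(u) = \psi_T$. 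Inspecting (\ref{solution}) shows that the $H^1$-norm of $\rho_0$ is controlled by $\bigl(\sum_{k \in D} k^2 |c_k|^2\bigr)^{1/2}$, which is comparable to the $H^2 \cap H_0^1(I)$-norm of $\psi_T$; hence the assignment $R : \psi_T \mapsto u$ is a bounded linear right inverse of $d_0\Gamma$.

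The conclusion then follows from a standard composition trick. Set $\Phi := \Gamma \circ R : \mathcal{W} \to \mathcal{W}$. Then $\Phi$ is $C^1$ near $0$, $d_0\Phi = d_0\Gamma \circ R = \mathrm{Id}_{\mathcal{W}}$, and $\Phi(0) = \Gamma(0) = e^{-i\lambda_{\bar{k}}T}\psi_{\bar{k}}$, which equals $\psi_{\bar{k}}$ whenever $T$ is an integer multiple of $8\pi$ (since $\bar{k}$ is odd forces $e^{-i\lambda_{\bar{k}}T} = 1$). The Banach space inverse function theorem then produces open neighbourhoods $\tilde P \ni 0$ and $P \ni \psi_{\bar{k}}$ in $\mathcal{W}$ on which $\Phi : \tilde P \to P$ is a $C^1$-diffeomorphism, and setting $V := R(\tilde P)$ yields the sought open neighbourhood of $0 \in H_0^1(0,T)$, for which $\Gamma(V) \supseteq \Phi(\tilde P) = P$. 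The step I expect to require most care is the topology-matching in the construction of $R$: namely, checking that the formula (\ref{solution}), together with the zero-extension past $t = 8\pi$, genuinely defines a \emph{bounded} linear operator $\mathcal{W} \to H_0^1(0,T)$, and not merely the $L^2$-valued right inverse that a bare moment-problem argument (à la Theorem~\ref{Th_K}) would produce. Once this topological verification is in place, the rest of the proof is an orderly composition of Propositions~\ref{Proposition 4.1} and~\ref{Prop_moment} with the inverse function theorem.
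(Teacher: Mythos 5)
Your argument follows the paper's proof essentially verbatim: the $C^1$ regularity comes from Proposition~\ref{Proposition 4.1} and the discussion of the invariance of $\mathcal{W}$, the surjectivity of $d_{0}\Gamma$ from the reduction (\ref{Lin4})--(\ref{Lin3}) together with Proposition~\ref{Prop_moment}, and the conclusion from an inverse-function-type theorem (the paper cites the Local Surjectivity Theorem of \cite{Ratiu}, which you simply unpack by exhibiting a bounded right inverse $R$ and composing --- a worthwhile detail, since that theorem implicitly needs the kernel of $d_{0}\Gamma$ to split, which is exactly what a bounded right inverse provides). Two cosmetic slips that do not affect the argument: the $H^{1}$-norm of $\rho_{0}$ in (\ref{solution}) is controlled by $\left(  \sum_{k\in D}k^{4}\left\vert c_{k}\right\vert ^{2}\right)  ^{1/2}$ rather than $\left(  \sum_{k\in D}k^{2}\left\vert c_{k}\right\vert ^{2}\right)  ^{1/2}$ (which is still comparable to the $H^{2}$-norm of $\psi_{T}$, as you conclude), and $V:=R(\tilde{P})$ is not open in $H_{0}^{1}(0,T)$, so the final neighbourhood must be obtained by enlarging it, exactly as the local surjectivity statement used in the paper does.
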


\begin{proof}
As remarked at the beginning of this section, for $\psi_{0}\in\mathcal{W}$,
the map $\Gamma$ is $C^{1}(H_{0}^{1}(0,T),\,\mathcal{W})$. Moreover, under our
assumptions, $d_{\alpha}\Gamma$ is surjective for $\alpha=0$. Then, by the
Local Surjectivity Theorem (e.g. \cite{Ratiu}) we know that there exists an
open neighbourhood $V\times P\subseteq H_{0}^{1}(0,T)\times\mathcal{W}$ of
$\left(  0,\Gamma(0)\right)  $ such that the restriction $\left.
\Gamma\right\vert _{V}:V\rightarrow P$ is surjective. By definition,
$\Gamma(0)=e^{-i\lambda_{\bar{k}}T}\psi_{\bar{k}}$ and $P$ is a neighbourhood
of $\psi_{\bar{k}}$ in $\mathcal{W}$.
\end{proof}

\bigskip

\end{document}